\numberwithin{equation}{section}
\theoremstyle{plain}
\newtheorem{thm}{Theorem}[section]
\newtheorem{prop}{Proposition}[section]
\newtheorem{lem}{Lemma}[section]
\theoremstyle{definition}
\newtheorem{rem}{Remark}[section]
\newtheorem{exper}{Experiment}[section]
\def\index#1{}
\begin{document}

\begin{frontmatter}
\pretitle{Research Article}

\title{Taylor's power law for the $N$-stars network evolution model}

\author{\inits{I.}\fnms{Istv\'{a}n}~\snm{Fazekas}\thanksref{cor1}\ead[label=e1]{fazekas.istvan@inf.unideb.hu}}
\author{\inits{Cs.}\fnms{Csaba}~\snm{Nosz\'{a}ly}\ead[label=e2]{noszaly.csaba@inf.unideb.hu}}
\author{\inits{N.}\fnms{No\'{e}mi}~\snm{Uzonyi}\ead[label=e2]{unomi95@gmail.com}}
\thankstext[type=corresp,id=cor1]{Corresponding author.}
\address{Faculty of Informatics, \institution{University of Debrecen}, P.O. Box 12,
4010~Debrecen, \cny{Hungary}}



\markboth{I. Fazekas et al.}{Taylor's power law for the $N$-stars network evolution model}

\begin{abstract}
Taylor's power law states that the variance function decays as a power
law. It is observed for population densities of species in ecology. For
random networks another power law, that is, the power law degree
distribution is widely studied. In this paper the original Taylor's
power law is considered for random networks. A precise mathematical
proof is presented that Taylor's power law is asymptotically true for
the $N$-stars network evolution model.
\end{abstract}
\begin{keywords}
\kwd{Taylor's power law}
\kwd{random graph}
\kwd{preferential attachment}
\kwd{scale free}
\kwd{gamma function}
\end{keywords}
\begin{keywords}[MSC2010]%
\kwd{05C80}
\kwd{62E10}
\end{keywords}

\received{\sday{13} \smonth{3} \syear{2019}}
\revised{\sday{9} \smonth{8} \syear{2019}}
\accepted{\sday{9} \smonth{8} \syear{2019}}
\publishedonline{\sday{16} \smonth{9} \syear{2019}}
\end{frontmatter}

\section{Introduction}%
\label{sec1}
Taylor's power law\index{power law} is a well-known empirical pattern in ecology. Its
general form is
\begin{equation*}
V(\mu ) \approx a \mu ^{b} ,
\end{equation*}
where $\mu $ is the mean and $V(\mu )$ is the variance of a non-negative
random variable,
$a$ and $b$ are constants. $V(\mu )$ is also called the variance
function (see \cite{Morris1982}). Taylor's power law\index{power law} is called
after the British ecologist L. R. Taylor (see \cite{Taylor1961}).
Taylor's power law\index{power law} is observed for population densities of hundreds of\vadjust{\goodbreak}
species in ecology. It is 
observed in medical sciences, demography
(\cite{Cohen2018}), physics, finance (for an overview see
\cite{Kertesz2008}). Most papers on the topic present empirical studies,
but some of them offer models as well (e.g. \cite{Cohen2018} for
mortality data, \cite{Cohen2013} for population dynamics,
\cite{Kertesz2008} for complex systems). We mention that in the theory
of complex systems Taylor's power law\index{power law} is called `fluctuation scaling',
$V(\mu )$ is called the fluctuation and $\mu $ is the average. There are
papers studying Taylor's power law\index{power law} on networks (see, e.g.
\cite{barabasi2004}). In those papers Taylor's law concerns some
random variable produced by a certain process on the network.

However, there is another power law\index{power law} for networks. There are large
networks satisfying $p_{k} \sim Ck^{-\gamma }$ as $k\to \infty $, where
$p_{k}$ is the probability that a node has degree $k$. This relation is
often referred to as a power-law degree distribution or a scale-free
network. Here and in what follows $a_{k} \sim b_{k}$ means that
$\lim_{k\to \infty } a_{k}/b_{k} =1$. In their seminal paper
\cite{barabasi} Barab\'{a}si and Albert list several
scale-free large networks (actor collaboration, WWW, power grid, etc.), they introduce
the preferential attachment\index{preferential attachment} model and give an argument and numerical
evidence that the preferential attachment\index{preferential attachment} rule leads to a scale-free
network. A short description of the preferential attachment\index{preferential attachment} network
evolution model is the following. At every time step $t=2,3,\dots $ a
new vertex with $N$ edges is added to the existing graph so that the
$N$ edges link the new vertex to $N$ old vertices. The probability
$\pi _{i}$ that the new vertex will be connected to the old vertex
$i$ depends on the degree $d_{i}$ of vertex $i$,\index{vertex} so that $\pi _{i}= d
_{i}/\sum_{j} d_{j}$, where $\sum_{j} d_{j}$ is the cumulated sum of
degrees. A rigorous definition of the preferential attachment\index{preferential attachment} model was
given in \cite{bollobas}, where a mathematical proof of the power
law\index{power law} degree distribution was also presented. The idea of preferential
attachment\index{preferential attachment} and the scale-free property incited enormous research
activity. The mathematical theory is described in the monograph
\cite{hofstad} written by van der Hofstad (see also
\cite{durrett} and \cite{Chung-Lu}). The general aspects of
network theory are included in the comprehensive book
\cite{barabasiBook} by A. L. Barab\'{a}si.

There are lot of modifications of the preferential attachment\index{preferential attachment} model,
here we can list only a few of them. The following general graph
evolution model was introduced by Cooper and Frieze in
\cite{cooper}. At each time step either a new vertex or an old one
generates new edges. In both cases the terminal vertices can be chosen
either uniformly or according to the preferential attachment\index{preferential attachment} rule. In
\cite{BaMo2,FIPB2} and \cite{FIPB3} the ideas
of Cooper and Frieze \cite{cooper} were applied, but instead of
the original preferential attachment\index{preferential attachment} rule, the terminal vertices were
chosen according to the weights of certain cliques.

In several cases the connection of two edges in a network can be
interpreted as co-operation (collaboration). For example in the movie
actor network two actors are connected by an edge if they have appeared
in a film together. In the collaboration graph of scientists an edge
connects two people if they have been co-authors of a paper (see, e.g.
\cite{durrett}). In social networks, besides connections of two
members, other structures are also important. In \cite{Nastos} or
\cite{BaMo2} cliques are considered to describe co-operations. In
a clique any two vertices are connected, that is, any two members of the
clique co-operate. However, in real-life examples, in a co-operation the
members can play different roles. In a team usually one person plays
central role and the other ones play peripheral roles.
Trying to handle this
situation and to find a mathematically tractable model leads to the study
of star-like structures, see \cite{Nstar}.

In \cite{Nstar} the concept of \cite{FIPB2} was applied but
instead of cliques, star-like structures were considered. A team has
star structure if there is a head of the team and all other members are
connected to him/her. We call a graph $N$-star graph if it has $N$
vertices, one of them is called the central vertex,\index{central vertex} the remaining
$N-1$ vertices are called peripheral vertices, and it has $N-1$ edges.
The edges are directed, they start from the $N-1$ peripheral vertices
and their end point is the central vertex.\index{central vertex} In \cite{Nstar} the
following $N$-stars network evolution model was presented. In this model
at each step either a new $N$-star is constructed or an old one is
selected 
(activated) again. When $N$ vertices form an $N$ star,
then we say that they are in interaction (in other words they
co-operate). During the evolution, a vertex\index{vertex} can be in interaction
several times. We define for any vertex its central weight\index{central weight} and its
peripheral weight.\index{peripheral ! weight}\index{weight peripheral} The central weight\index{central weight} of a vertex\index{vertex} is $w_{1}$, if the
vertex\index{vertex} is a central vertex\index{central vertex} in interactions $w_{1}$ times. The
peripheral weight\index{peripheral ! weight}\index{weight peripheral} of a vertex\index{vertex} is $w_{2}$, if the vertex\index{vertex} is a peripheral
vertex\index{peripheral ! vertex}\index{vertex peripheral} in interactions $w_{2}$ times. In \cite{Nstar} asymptotic
power law distribution was proved both for $w_{1}$ and $w_{2}$.

We are interested in the following general question. Is the original
Taylor's power law\index{power law} true for random networks? First we considered data
sets of real life networks. We analysed them and the statistical
analysis showed that there are cases when Taylor's law is true and there
are cases when it is not true (our empirical results will be
published elsewhere). So we encountered the following more specific
problem: Find network structures where Taylor's power law\index{power law} is true.
To this end we analysed the above $N$-stars network evolution model.

In this paper we prove an asymptotic Taylor's power law\index{power law} for the
$N$-stars network evolution model. We shall calculate the mean and the
variance of $w_{2}$ when $w_{1}$ is fixed, and we shall see that the
variance function is asymptotically quadratic. In Section \ref{Model},
the precise mathematical description of the model and the results are
given. We recall from \cite{Nstar} the asymptotic joint
distribution of $w_{1}$ and $w_{2}$ (Proposition \ref{scalefree}). Then
we calculate the marginal distribution\index{marginal distribution} (Proposition \ref{marginal}), the
expectation\index{expectation} (Proposition~\ref{expectation}), and the second moment
(Proposition \ref{secondMoment}). The main result is Theorem~\ref{mainTHM}.
The proofs are presented in Section \ref{proofs}. Besides
mathematical proofs, we give also a numerical evidence. In Section
\ref{numerical} simulation results are presented supporting our
theoretical results.

\section{The $N$-stars network evolution model and the main results}%
\label{Model} First we give a short mathematical description
of our random graph model from \cite{Nstar}.

Let $N\geq 3$ be a fixed number. We start at time $0$ with an $N$-star
graph. Throughout the paper we call a graph $N$-star graph if it has
$N$ vertices, one of them is the central vertex,\index{central vertex} the remaining $N-1$
vertices are peripheral ones, and the graph has $N-1$ directed edges.
The edges start from the $N-1$ peripheral vertices and their end point
is the central vertex.\index{central vertex} So the central vertex\index{central vertex} has in-degree $N-1$, and
each of the $N-1$ peripheral vertices has out-degree $1$. The evolution
of our graph is governed by the weights of the $N$-stars and the
$(N-1)$-stars. In our model, the initial weight\index{weight} of the $N$-star is
$1$, and the initial weights of its $(N-1)$-star sub-graphs are also
$1$. (An $(N-1)$-star sub-graph is obtained if a peripheral vertex\index{peripheral ! vertex}\index{vertex peripheral} is
deleted from the $N$-star graph. The number of these $(N-1)$-star
sub-graphs is $N-1$.)

We first explain the model on a high level, before giving a formal
definition in the next paragraphs. The general rules of the evolution
of our graph are the following. At each time step, $N$ vertices interact,
that is, they form an $N$-star. It means that we draw all edges from the
peripheral vertices to the central vertex\index{central vertex} so that the vertices will form
an $N$-star graph. During the evolution we allow parallel edges. When
$N$ vertices interact, not only new edges are drawn, but the weights of
the stars are also increased. At the first interaction of $N$ vertices
the newly created $N$-star gets weight $1$, and its new $(N-1)$-star
sub-graphs also get weight $1$.\index{weight} If an $(N-1)$-star sub-graph is not
newly created, then its weight\index{weight} is increased by $1$. When an existing
$N$-star is selected (activated) again, then its weight\index{weight} and the weights
of its $(N-1)$-star sub-graphs are increased by $1$. So the weight\index{weight} of
an $N$-star is the number of its activations. We can see that the weight\index{weight}
of an $(N-1)$-star is equal to the sum of the weights of the $N$-stars
containing it. The weights will play crucial role in our model. The
higher the weight\index{weight} of a star the higher the chance that it will be
selected (activated) again.\looseness=1

Now we describe the details of the evolution steps of our graph. We have
two options in every step of the evolution. \texttt{Option I} has
probability $p$. In this case we add a new vertex, and it interacts with
$N-1$ old vertices. \texttt{Option II} has probability $1-p$. In this
case we do not add any new vertex, but $N$ old vertices interact. Here
$0<p\leq 1$ is fixed.

\texttt{Option I.} In this case, that is, when a new vertex is born, we
have again two possibilities: \texttt{I/1} and \texttt{I/2}.

\texttt{I/1.} The first possibility, which has probability $r$, is the
following. (Here $0\leq r\leq 1$ is fixed.) We choose one of the
existing $(N-1)$-star sub-graphs according to the preferential
attachment\index{preferential attachment} rule, and its $N-1$ vertices and the new vertex will
interact. Here the preferential attachment\index{preferential attachment} rule means that an
$  (N-1  )$-star of weight\index{weight} $v_{t}$ is chosen with probability
$v_{t}/\sum_{h} v_{h}$, where $\sum_{h} v_{h}$ is the cumulated weight
of the $(N-1)$-stars. The interaction of the new vertex and the old
$(N-1)$-star means that they establish a new $N$-star. In this newly
created $N$-star the center will be the vertex\index{vertex} which was the center in
the old $(N-1)$-star, the former $N-2$ peripheral vertices remain
peripheral and the newly born vertex\index{vertex} will be also peripheral. A new edge
is drawn from each peripheral vertex\index{peripheral ! vertex}\index{vertex peripheral} to the central one, and then the
weights are increased by $1$. More precisely, the just created $N$-star
gets weight 1, among its $(N-1)$-star sub-graphs there are $(N-2)$ new
ones, so each of them gets weight 1, finally the weight\index{weight} of the only old
$(N-1)$-star sub-graph is increased by 1.

\texttt{I/2.} The second possibility has probability $1-r$. In this
case we choose $N-1$ old vertices uniformly at random, and they will
form an $N$-star graph with the new vertex, so that the new vertex will
be the center. The edges are drawn from the peripheral vertices to the
center. As here the newly created $N$-star graph and all of its
$(N-1)$-star sub-graphs are new, so all of them get weight\index{weight} 1.

\texttt{Option II.} In this case, that is, when we do not add any new
vertex, we have two ways again: \texttt{II/1} and \texttt{II/2}.

\texttt{II/1.} The first way has probability $q$. (Here $0\leq q
\leq 1$ is fixed.) We choose one of the existing $N$-star sub-graphs by
the preferential attachment\index{preferential attachment} rule, then draw a new edge from each of its
peripheral vertices to its center vertex.\index{vertex} Then the weight\index{weight} of the
$N$-star and the weights of its $(N-1)$-star sub-graphs are increased
by $1$. Here the preferential attachment\index{preferential attachment} rule means that an $N$-star of
weight\index{weight} $v_{t}$ is chosen with probability $v_{t}/\sum_{h} v_{h}$, where
$\sum_{h} v_{h}$ is the cumulated weight of the $N$-stars.

\texttt{II/2.} The second way has probability $1-q$. In this case we
choose $N$ old vertices uniformly at random, and they establish an
$N$-star graph. Its center is chosen again uniformly at random out of
the $N$ vertices. Then, as before, new edges are drawn from the
peripheral vertices to the central one, and the weights of the $N$-star
and its $(N-1)$-star sub-graphs are increased by $1$.

\begin{rem}
\label{remI}
For every vertex we shall use its central weight\index{central weight} and its peripheral
weight.\index{peripheral ! weight}\index{weight peripheral} The central weight\index{central weight} of a vertex\index{vertex} is $w_{1}$, if the vertex\index{vertex} was a
central vertex\index{central vertex} in interactions $w_{1}$ times. The peripheral weight\index{peripheral ! weight}\index{weight peripheral} of
a vertex\index{vertex} is $w_{2}$, if the vertex\index{vertex} was a peripheral vertex\index{peripheral ! vertex}\index{vertex peripheral} in
interactions $w_{2}$ times. We can see that the central weight\index{central weight} of a
vertex\index{vertex} is equal to $w_{1}= \dfrac{d_{1}}{N-1}$ and the peripheral weight\index{peripheral ! weight}\index{weight peripheral}
of a vertex\index{vertex} is equal to $w_{2}=d_{2}$, where $d_{1}$ denotes the
in-degree of the vertex\index{vertex} and $d_{2}$ denotes its out-degree. The weights
$w_{1}$ and $w_{2}$ describe well the role of a vertex\index{vertex} in the network.
Moreover, we use $w_{1}$ and $w_{2}$ instead of degrees to obtain
symmetric formulae that allow us to translate the result from
$w_{1}$ to $w_{2}$ and vice versa without having to change the proofs.
\end{rem}
Throughout the paper $0<p\leq 1$, $0\leq r\leq 1$, $0\leq q
\leq 1$ are fixed numbers. In our formulae the following parameters are
used.
%
\begin{align}
\label{parameters} \alpha _{11} &= pr, & \alpha _{12} &=
(1-p)q,
\nonumber
\\
\alpha _{1} &= \alpha _{11}+\alpha _{12},&
\alpha _{2} &= pr \frac{N-2}{N-1}+(1-p)q,
\nonumber
\\
\beta _{1} &= \frac{(1-p)(1-q)}{p}, & \beta _{2} &=
(N-1) \biggl[(1-r)+ \frac{(1-p)(1-q)}{p} \biggr],
\nonumber
\\
\alpha &= \alpha _{1}+\alpha _{2}, & \beta &= \beta
_{1}+\beta _{2}.
\end{align}

In \cite{Nstar} it was shown that the above evolution leads to a
scale-free graph. To describe the result, let $V_{n}$ denote the number
of all vertices and let $X   ( n,w_{1},w_{2}   )$ denote the
number of vertices with central weight $w_{1}$\index{central weight} and peripheral weight
$w_{2}$\index{peripheral ! weight} after the $n$th step.
%
\begin{prop}[Theorem 2.1 of \cite{Nstar}]
\label{scalefree}
Let $0<p<1$, $0<q<1$, $0<r<1$. Then
for any fixed $w_{1}$ and $w_{2}$ with either $w_{1}=0$ and
$1\leq w_{2}$ or $1\leq w_{1}$ and $w_{2}\geq 0$ we have
%
\begin{equation}
\dfrac{X  (n,w_{1},w_{2}  )}{V_{n}}\rightarrow x_{w_{1},w
_{2}}
\end{equation}
almost surely as $n\rightarrow \infty $, where $x_{w_{1},w_{2}}$ are
fixed non-negative numbers.

Let $w_{2}$ be fixed, then as $w_{1}\rightarrow \infty $
%
\begin{equation}
\label{w2fixthm} x_{w_{1},w_{2}}\sim A(w_{2})w_{1}^{-  (1+\frac{\beta _{2}+1}{\alpha
_{1}}  )},
\end{equation}
where
%
\begin{equation}
\label{Aform} A(w_{2})=\dfrac{1-r}{\alpha _{1}}\dfrac{1}{w_{2}!}
\dfrac{\varGamma
  (w_{2}+\frac{\beta _{2}}{\alpha _{2}}  )}{\varGamma   (\frac{
\beta _{2}}{\alpha _{2}}  )}\dfrac{\varGamma   (1+\frac{
\beta +1}{\alpha _{1}}  )}{\varGamma   (1+\frac{\beta _{1}}{
\alpha _{1}}  )}.
\end{equation}

Let $w_{1}$ be fixed. Then, as $w_{2}\rightarrow \infty $,
%
\begin{equation}
\label{w1fixthm} x_{w_{1},w_{2}}\sim C(w_{1})w_{2}^{-  (1+\frac{\beta _{1}+1}{\alpha
_{2}}  )},
\end{equation}
where
%
\begin{equation}
\label{Cform} C(w_{1})=\dfrac{r}{\alpha _{2}}\dfrac{1}{w_{1}!}
\dfrac{\varGamma   (w
_{1}+\frac{\beta _{1}}{\alpha _{1}}  )}{\varGamma   (\frac{
\beta _{1}}{\alpha _{1}}  )}\dfrac{\varGamma   (1+\frac{
\beta +1}{\alpha _{2}}  )}{\varGamma   (1+\frac{\beta _{2}}{
\alpha _{2}}  )}.
\end{equation}
Here $\varGamma $ denotes the Gamma function.
\end{prop}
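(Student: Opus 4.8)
The plan is to obtain the statement from a general almost sure convergence theorem for stochastic recursions of the type developed in \cite{FIPB2} and used in \cite{Nstar}. Writing $\mathcal{F}_n$ for the $\sigma$-field generated by the first $n$ steps, I would first compute the one-step conditional expectation of $X(n+1,w_1,w_2)$ given $\mathcal{F}_n$ by tracking how the pair $(w_1,w_2)$ of a fixed vertex can move: the central weight $w_1$ increases by $1$ when the vertex is activated as a centre, the peripheral weight $w_2$ increases by $1$ when it is activated as a peripheral vertex, and new vertices enter the population at $(0,1)$ (Option I/1) or at $(1,0)$ (Option I/2). Since at most $N$ vertices change their weights in any single step, the increments of $X(n,\cdot,\cdot)$ are bounded, which is exactly the hypothesis that makes the convergence theorem applicable and produces the almost sure limits $x_{w_1,w_2}$ after normalising by $V_n\sim pn$.

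The key computation is to rewrite the preferential-attachment selection probabilities, which are defined through the weights of the $(N-1)$-stars and of the $N$-stars, in terms of the vertex weights $w_1$ and $w_2$. This rests on two bookkeeping identities: the total weight of the $(N-1)$-stars centred at a vertex equals $(N-1)w_1$ and the total weight of the $N$-stars centred at it equals $w_1$, while the corresponding totals in which the vertex is \emph{peripheral} equal $(N-2)w_2$ and $w_2$. I would then note that the cumulated $(N-1)$-star weight increases by exactly $N-1$ and the cumulated $N$-star weight by exactly $1$ at every step, so both grow deterministically and linearly in $n$. Combining these with the uniform choices of Options I/2 and II/2 (each of order $1/V_n\sim 1/(pn)$) and collecting terms yields a per-vertex outgoing rate $\tfrac{1}{n}(\alpha_1 w_1+\beta_1)$ for the central weight and $\tfrac{1}{n}(\alpha_2 w_2+\beta_2)$ for the peripheral weight; this is precisely where the parameters \eqref{parameters} arise, and dividing by $V_n\sim pn$ turns the source intensities $pr$ and $p(1-r)$ into $r$ and $1-r$. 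The limiting recursion then reads
\begin{equation*}
x_{w_1,w_2}\bigl(1+\alpha_1 w_1+\alpha_2 w_2+\beta\bigr)=\bigl(\alpha_1(w_1-1)+\beta_1\bigr)x_{w_1-1,w_2}+\bigl(\alpha_2(w_2-1)+\beta_2\bigr)x_{w_1,w_2-1}+s_{w_1,w_2},
\end{equation*}
with $s_{w_1,w_2}$ equal to $r$ at $(0,1)$, to $1-r$ at $(1,0)$, and zero otherwise.

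To extract the tail behaviour I would induct on $w_2$. For fixed $w_2$ the display is a first-order linear recursion in $w_1$; solving the boundary slice $w_2=0$ first (where the coupling term is absent) gives $x_{w_1,0}\sim A(0)\,\varGamma(w_1+\beta_1/\alpha_1)/\varGamma\bigl(w_1+1+(\beta+1)/\alpha_1\bigr)$, and the standard asymptotic $\varGamma(w_1+a)/\varGamma(w_1+b)\sim w_1^{a-b}$ produces the exponent $-(1+(\beta_2+1)/\alpha_1)$, with the prefactor $A(0)=\tfrac{1-r}{\alpha_1}\varGamma(1+(\beta+1)/\alpha_1)/\varGamma(1+\beta_1/\alpha_1)$ coming from the source $1-r$ at $(1,0)$. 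Substituting the ansatz $x_{w_1,w_2}\sim A(w_2)\,w_1^{-(1+(\beta_2+1)/\alpha_1)}$ into the full recursion and balancing the coefficients of $w_1^{-(1+(\beta_2+1)/\alpha_1)}$, the $\alpha_1$-terms cancel (using $\beta=\beta_1+\beta_2$) and leave $A(w_2)\,\alpha_2 w_2=\bigl(\alpha_2(w_2-1)+\beta_2\bigr)A(w_2-1)$; telescoping this relation produces the Gamma ratio $\tfrac{1}{w_2!}\varGamma(w_2+\beta_2/\alpha_2)/\varGamma(\beta_2/\alpha_2)$ and hence exactly \eqref{Aform}. The companion statement \eqref{w1fixthm}--\eqref{Cform} follows from the parallel derivation with the roles of $w_1$ and $w_2$ interchanged, in the symmetric manner emphasised in Remark \ref{remI}.

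The main obstacle is the second step: rigorously justifying the passage from the star-weight preferential-attachment rule to a rate that is linear in the vertex weights, and controlling the $O(1)$ discrepancies between the cumulated star weights and their leading linear terms, so that the drift coefficients of the stochastic recursion converge to the stated constants. Verifying the hypotheses of the convergence theorem (bounded increments and convergent coefficients) together with the separate treatment of the boundary slices $w_1=0$ and $w_2=0$ is the technical heart of the argument; once the limiting recursion is secured, the asymptotic matching that pins down $A(w_2)$ and $C(w_1)$ is delicate but essentially routine.
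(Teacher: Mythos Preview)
This proposition is not proved in the present paper; it is quoted from \cite{Nstar}. What the paper does import from \cite{Nstar} are the exact recursion of Lemma~\ref{xdw} and the explicit representation \eqref{gammax0l}--\eqref{bw10}, and these show how the argument in \cite{Nstar} differs from your sketch. Your derivation of the limiting recursion is correct and agrees with Lemma~\ref{xdw}: the bookkeeping for the star weights, the deterministic linear growth of the cumulated $(N-1)$- and $N$-star weights, and the resulting per-vertex rates $(\alpha_1 w_1+\beta_1)/n$ and $(\alpha_2 w_2+\beta_2)/n$ are precisely the content of \eqref{parameters}, and the almost sure convergence is indeed obtained via a stochastic-approximation theorem with bounded increments and convergent drift coefficients.

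The gap is in the tail analysis. Substituting the ansatz $x_{w_1,w_2}\sim A(w_2)\,w_1^{-\gamma}$ and ``balancing coefficients'' is only a consistency check. In the recursion the two $x_{\cdot,w_2}$ terms agree to leading order $\alpha_1 w_1\cdot w_1^{-\gamma}$ for \emph{any} constant $A(w_2)$; the relation $A(w_2)\,\alpha_2 w_2=(\alpha_2(w_2-1)+\beta_2)A(w_2-1)$ emerges only at the next scale, where the inhomogeneous term and the subleading corrections to $x_{w_1,w_2}$ compete. This fixes $A(w_2)/A(w_2-1)$ \emph{if} the power-law asymptotic holds, but does not establish that it does. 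The proof in \cite{Nstar}, visible through the quoted formulae \eqref{xw1l}--\eqref{bw10}, bypasses this by solving the recursion exactly in the $w_2$-direction for each fixed $w_1$,
\[
x_{w_1,l}=\sum_{i=1}^{l}b_{w_1-1,i}^{(l)}x_{w_1-1,i}+b_{w_1,0}^{(l)}x_{w_1,0},
\]
with explicit Gamma-ratio coefficients $b^{(l)}$, and then reading off \eqref{w2fixthm}--\eqref{Cform} from this closed form via \eqref{GSTR}. Your inductive scheme can be made rigorous, but only by writing down the variation-of-parameters solution of the one-index recursion in $w_1$ and estimating the resulting sum, which amounts to rederiving a formula of the type \eqref{xw1l}; the bare matching argument is not a proof.
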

%
\begin{rem}
\label{remI+}
Using $w_{1}$ and $w_{2}$, we obtained symmetric formulae in the
following sense. If we interchange subscripts $1$ and $2$ of
$\alpha $ and $\beta $ (and use $r$ instead of $1-r$), then we obtain
formulae \eqref{w1fixthm}--\eqref{Cform} from formulae
\eqref{w2fixthm}--\eqref{Aform}. Therefore we do not need new proofs when
we interchange the roles of $w_{1}$ and $w_{2}$. (Of course the basic
relations \eqref{w2fixthm}--\eqref{Aform} and
\eqref{w1fixthm}--\eqref{Cform} were proved separately. To do it we
applied the properties of our model and introduced the appropriate
parametrization given in \eqref{parameters}, see \cite{Nstar}.)
\end{rem}
%
\begin{rem}
\label{remNew}
We see that $x_{w_{1},w_{2}}$ is the asymptotic joint distribution of
the central weight\index{central weight} and the peripheral weight.\index{peripheral ! weight}\index{weight peripheral} To obtain Taylor's power
law,\index{power law} we have to find the conditional expectation\index{conditional expectation} $E_{w_{1}}$ and the
conditional second moment $M_{w_{1}}$ given that $w_{1}$ is fixed. Then
the asymptotic behaviour of $E_{w_{1}}$ and $M_{w_{1}}$ will imply that
Taylor's power law\index{power law} is satisfied asymptotically. We underline that the
asymptotic relations \eqref{w2fixthm} and \eqref{w1fixthm} do not
provide enough information to find the asymptotics of $E_{w_{1}}$
and $M_{w_{1}}$. So we need deep analysis of the joint distribution
$x_{w_{1},w_{2}}$\index{joint distribution} to obtain Taylor's power law.\index{power law}
\end{rem}
Now we turn to the new results of this paper. First we consider the
marginals of the asymptotic joint distribution $x_{w_{1},w_{2}}$.\index{joint distribution} Let
%
\begin{equation}
\label{Xwdefi} x_{w_{1},\cdot } = \sum_{l=0}^{\infty }x_{w_{1},l}
\end{equation}
be the first marginal distribution.\index{marginal distribution}
%
\begin{prop}
\label{marginal}
Let $0<p<1$, $0<q<1$, $0<r<1$. Then
%
\begin{equation}
\label{X0-} x_{0,\cdot } = \dfrac{r}{\beta _{1} +1} ,
\end{equation}
and for $w_{1}>0$
%
\begin{equation}
\label{Xw1+-} x_{w_{1},\cdot } = \sum_{i=1}^{\infty }x_{w_{1}-1,i}
\dfrac{(w_{1}-1)
\alpha _{1}+\beta _{1}}{w_{1} \alpha _{1} + \beta _{1} +1 } + x_{w_{1},0} \biggl( \dfrac{\beta _{2}}{w_{1} \alpha _{1} + \beta _{1} +1 } +1
\biggr) .
\end{equation}
Moreover
%
\begin{equation}
\label{X1.-} x_{1,\cdot } = \dfrac{\beta _{1}}{\alpha _{1} + \beta _{1} +1 } \biggl(
\dfrac{r}{
\beta _{1} +1} + \dfrac{1-r}{\beta _{1}} \biggr),
\end{equation}
and
%
\begin{equation}
\label{Xw1++-} x_{w_{1},\cdot } = \dfrac{\varGamma   (w_{1}+\frac{\beta _{1}}{
\alpha _{1}}  )}{\varGamma   (\frac{\beta _{1}}{\alpha _{1}}  )}
\dfrac{
\varGamma   (1+\frac{\beta _{1}+1}{\alpha _{1}}  )}{\varGamma
  (w_{1}+1+\frac{\beta _{1}+1}{\alpha _{1}}  )} \dfrac{1-r+
\beta _{1}}{\beta _{1}(\beta _{1} +1)}
\end{equation}
for $w_{1}>1$. We have a proper distribution, that is, $\sum_{w_{1}=0}
^{\infty }x_{w_{1},\cdot } =1$.
\end{prop}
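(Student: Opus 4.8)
The plan is to derive every formula from the balance (master) equations that characterise the limiting ratios $x_{w_1,w_2}$ and that underlie Proposition~\ref{scalefree}. Writing these out, the proportion at a state $(w_1,w_2)$ balances the identity term against the outflow generated when a vertex becomes central (rate $\alpha_1 w_1+\beta_1$) or peripheral (rate $\alpha_2 w_2+\beta_2$), the inflow from $(w_1-1,w_2)$ and from $(w_1,w_2-1)$, and a source term for newly born vertices, which enter only at $(1,0)$ (born central, normalised weight $1-r$) and at $(0,1)$ (born peripheral, normalised weight $r$). Concretely I would use
\[
x_{w_1,w_2}\bigl(w_1\alpha_1+w_2\alpha_2+\beta_1+\beta_2+1\bigr)=\bigl((w_1-1)\alpha_1+\beta_1\bigr)x_{w_1-1,w_2}+\bigl((w_2-1)\alpha_2+\beta_2\bigr)x_{w_1,w_2-1}+S_{w_1,w_2},
\]
with the convention that terms carrying a negative index vanish and $S_{1,0}=1-r$, $S_{0,1}=r$, all other sources being $0$.

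First I would treat the two boundary rows. Summing the $w_1=0$ equations over $w_2\ge 1$, the preferential contributions $\alpha_2 w_2 x_{0,w_2}$ telescope against the inflow $\alpha_2(w_2-1)x_{0,w_2-1}$ while the $\beta_2$ terms cancel, leaving $(1+\beta_1)x_{0,\cdot}=S_{0,1}=r$, i.e.\ \eqref{X0-}. The $w_2=0$ row produces the one–dimensional recurrence $x_{w_1,0}(w_1\alpha_1+\beta_1+\beta_2+1)=((w_1-1)\alpha_1+\beta_1)x_{w_1-1,0}$ for $w_1\ge 2$, together with the initial value $x_{1,0}=(1-r)/(1+\alpha_1+\beta_1+\beta_2)$ coming from $S_{1,0}$.

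Next I would sum the general equation over all $w_2\ge 0$. The peripheral terms again telescope; the only delicate point is that this is an infinite sum, so I would work with partial sums up to $M$ and let $M\to\infty$, the boundary contribution $\alpha_2 M\,x_{w_1,M}$ vanishing because $x_{w_1,M}=O\bigl(M^{-(1+(\beta_1+1)/\alpha_2)}\bigr)$ by \eqref{w1fixthm}. This yields the reduced recurrence $(w_1\alpha_1+\beta_1+1)x_{w_1,\cdot}=((w_1-1)\alpha_1+\beta_1)x_{w_1-1,\cdot}$. Splitting off the $w_2=0$ term via $x_{w_1-1,\cdot}=x_{w_1-1,0}+\sum_{i\ge 1}x_{w_1-1,i}$ and eliminating $x_{w_1-1,0}$ with the $w_2=0$ recurrence reproduces \eqref{Xw1+-} for $w_1\ge 2$, while the case $w_1=1$ follows in the same way from the source term and, after inserting \eqref{X0-} and $x_{1,0}$, gives \eqref{X1.-}.

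Finally, the closed form \eqref{Xw1++-} follows by iterating the reduced recurrence from the initial value $x_{1,\cdot}$: the product $\prod_{k=2}^{w_1}\frac{(k-1)\alpha_1+\beta_1}{k\alpha_1+\beta_1+1}$ becomes a ratio of Gamma functions after factoring out $\alpha_1$, and the prefactor collapses to $\frac{1-r+\beta_1}{\beta_1(\beta_1+1)}$ by repeated use of $\varGamma(z+1)=z\varGamma(z)$. For the normalisation I would sum \eqref{Xw1++-} over $w_1\ge 1$, add \eqref{X0-}, and evaluate the Gamma–ratio series with the telescoping identity
\[
\frac{\varGamma(k+a)}{\varGamma(k+1+b)}=\frac{1}{b-a}\left(\frac{\varGamma(k+a)}{\varGamma(k+b)}-\frac{\varGamma(k+1+a)}{\varGamma(k+1+b)}\right),
\]
with $a=\beta_1/\alpha_1$, $b=(\beta_1+1)/\alpha_1$ and $b-a=1/\alpha_1>0$; the sum collapses to $\frac{1-r+\beta_1}{\beta_1+1}$, so that together with \eqref{X0-} the total is $1$. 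The main obstacles I anticipate are the careful boundary bookkeeping at $w_1=0$ and $w_2=0$, which pins down the constants $r$ and $1-r$, and the rigorous justification of the limit in the infinite $w_2$–sum (where \eqref{w1fixthm} is exactly what guarantees the vanishing of the boundary term), together with the telescoping evaluation of the normalising series.
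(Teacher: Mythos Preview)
Your proposal is correct and takes a genuinely different route from the paper. The paper never sums the master equation directly; instead it invokes the explicit representation
\[
x_{w_1,l}=\sum_{i=1}^{l} b_{w_1-1,i}^{(l)}x_{w_1-1,i}+b_{w_1,0}^{(l)}x_{w_1,0},
\]
where the $b$-coefficients are ratios of Gamma functions, sums each $b$-series over $l$ via the identity $\sum_{i\ge0}\Gamma(i+a)/\Gamma(i+b)=\Gamma(a)/((b-a-1)\Gamma(b-1))$, and thereby lands on \eqref{Xw1+-} \emph{first}; only afterwards does it combine \eqref{Xw1+-} with the $w_2=0$ recurrence to obtain the reduced one-step recursion $x_{w_1,\cdot}=\frac{(w_1-1)\alpha_1+\beta_1}{w_1\alpha_1+\beta_1+1}x_{w_1-1,\cdot}$. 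You do the opposite: summing the balance equation over $w_2$ and telescoping gives the reduced recursion immediately, and \eqref{Xw1+-} drops out by reinserting $x_{w_1-1,0}$ via the boundary recurrence. Your argument is more elementary in that it bypasses the $b$-coefficients and the explicit Gamma formula for $x_{0,l}$ entirely (your derivation of \eqref{X0-} needs only the recurrence, not the closed form), and your handling of the tail via \eqref{w1fixthm} is cleaner than an implicit appeal to absolute convergence of Gamma-ratio series. The paper's approach, on the other hand, reuses the same $b$-coefficient machinery verbatim in the proofs of Propositions~\ref{expectation} and~\ref{secondMoment}, so its extra investment pays off downstream; if you continue with your method for those results you will want to sum the master equation against $(w_2+\beta_2/\alpha_2)$ and $(w_2+\beta_2/\alpha_2)(w_2+1+\beta_2/\alpha_2)$, which telescopes in the same way.
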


Now we turn to the conditional expectations\index{conditional expectation} of the asymptotic
distribution. Let
%
\begin{equation}
\label{Ewdefi} E_{w_{1}} = \sum_{l=0}^{\infty }x_{w_{1},l}
l /x_{w_{1},\cdot }
\end{equation}
be the expectation\index{expectation} when the central weight $w_{1}$\index{central weight} is fixed.
%
\begin{prop}
\label{expectation}
Let $0<p<1$, $0<q<1$, $0<r<1$. Then for $w_{1}>1$ we have
%
\begin{equation}
\label{A_E+-} E_{w_{1}} = \dfrac{\varGamma   (2+\frac{\beta _{1}+1-\alpha _{2}}{
\alpha _{1}}  )}{\varGamma   (2+
\frac{\beta _{1}+1}{\alpha _{1}}  )}
\dfrac{\varGamma   (w_{1}+1+\frac{
\beta _{1}+1}{\alpha _{1}}  )}{\varGamma   (w_{1}+1+\frac{\beta
_{1}+1-\alpha _{2}}{\alpha _{1}}  )} \dfrac{A_{1}}{x_{1,\cdot }} -\frac{
\beta _{2}}{\alpha _{2}} ,
\end{equation}
where
%
\begin{align}
\label{A1+-} A_{1} &= \dfrac{r}{\beta _{1}+1-\alpha _{2}} \biggl( 1+
\dfrac{\beta
_{2}}{\alpha _{2}} \biggr) \dfrac{\beta _{1}}{\alpha _{1} + \beta _{1} +1
- \alpha _{2}}
\nonumber
\\
&\quad  + (1-r)\dfrac{\beta _{2}}{\alpha _{2}} \dfrac{1}{\alpha _{1} + \beta
_{1} +1 - \alpha _{2}}.
\end{align}
Moreover
%
\begin{equation}
\label{A_E+++} E_{w_{1}} \sim \dfrac{A_{1}}{x_{1,\cdot }}
\dfrac{\varGamma   (2+\frac{
\beta _{1}+1-\alpha _{2}}{\alpha _{1}}  )}{\varGamma   (2+\frac{
\beta _{1}+1}{\alpha _{1}}  )} w_{1}^{
\frac{\alpha _{2}}{\alpha _{1}}} .
\end{equation}
That is, the magnitude of $E_{w_{1}}$ 
approaches $ w_{1}^{\frac{\alpha _{2}}{
\alpha _{1}}}$ when $w_{1}\to \infty $.
\end{prop}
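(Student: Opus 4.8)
The plan is to compute the conditional first moment $S_{w_1}:=\sum_{l=0}^{\infty} l\,x_{w_1,l}=E_{w_1}\,x_{w_1,\cdot}$ and then divide by the marginal from Proposition~\ref{marginal}. I would start from the balance equation satisfied by the asymptotic joint distribution in the interior $w_1,w_2\ge 1$,
\[
(1+\alpha_1 w_1+\beta_1+\alpha_2 w_2+\beta_2)\,x_{w_1,w_2}
=(\alpha_1(w_1-1)+\beta_1)\,x_{w_1-1,w_2}+(\alpha_2(w_2-1)+\beta_2)\,x_{w_1,w_2-1},
\]
together with its boundary versions at $w_2=0$ and $w_1=0$. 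This is the relation underlying Proposition~\ref{marginal}; it comes from the evolution rules of the model and the parametrization \eqref{parameters}, the coefficient $\alpha_1 w_1+\beta_1$ being the rate at which a vertex increases its central weight and $\alpha_2 w_2+\beta_2$ the rate at which it increases its peripheral weight.

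Multiplying this relation by $w_2$ and summing over $w_2$, the crucial feature is that the second-moment contributions $\alpha_2\sum_l l^2 x_{w_1,l}$ produced on the two sides cancel, leaving a closed first-order recursion,
\[
(\alpha_1 w_1+\beta_1+1-\alpha_2)\,S_{w_1}=(\alpha_1(w_1-1)+\beta_1)\,S_{w_1-1}+\beta_2\,x_{w_1,\cdot}.
\]
I would solve it by superposition. A particular solution is $-\tfrac{\beta_2}{\alpha_2}\,x_{w_1,\cdot}$, which is checked using the ratio $x_{w_1-1,\cdot}/x_{w_1,\cdot}$ read off from \eqref{Xw1++-}; the homogeneous solution is the telescoping product $\prod_{k}\frac{\alpha_1(k-1)+\beta_1}{\alpha_1 k+\beta_1+1-\alpha_2}$, which collapses into the quotient of Gamma functions appearing in \eqref{A_E+-}. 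Dividing $S_{w_1}$ by the explicit marginal \eqref{Xw1++-} then produces both the Gamma-ratio prefactor and the additive constant $-\beta_2/\alpha_2$, while the multiplicative constant $A_1$ is fixed by the base case $w_1=1$: there the boundary equations together with the explicit values \eqref{X0-} and \eqref{X1.-} yield the two-term expression \eqref{A1+-}, whose split into an $r$-part and a $(1-r)$-part reflects the two ways a vertex can first acquire central weight.

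The asymptotic statement \eqref{A_E+++} then follows from
\[
\frac{\varGamma\!\left(w_1+1+\tfrac{\beta_1+1}{\alpha_1}\right)}{\varGamma\!\left(w_1+1+\tfrac{\beta_1+1-\alpha_2}{\alpha_1}\right)}\sim w_1^{\alpha_2/\alpha_1}\qquad(w_1\to\infty),
\]
the additive term $-\beta_2/\alpha_2$ being of lower order.

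I expect the main obstacle to be justifying the cancellation of the second-moment sums in the summation step. By \eqref{w1fixthm} the conditional tail satisfies $x_{w_1,l}\sim C(w_1)\,l^{-(1+(\beta_1+1)/\alpha_2)}$, so $S_{w_1}$ is finite exactly when $\beta_1+1-\alpha_2>0$, whereas $\sum_l l^2 x_{w_1,l}$ may be infinite in this regime; the cancellation above is then formally a cancellation of two divergent series. I would make it rigorous by summing only up to a cutoff $M$, retaining the genuine boundary term $\alpha_2 M^2 x_{w_1,M}$, and letting $M\to\infty$: by \eqref{w1fixthm} this term is of order $M^{1-(\beta_1+1)/\alpha_2}$, which tends to $0$ under precisely the condition $\beta_1+1-\alpha_2>0$ that guarantees finiteness of $E_{w_1}$. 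Carrying out this truncation cleanly, and matching it against the $w_2=0$ boundary equation so that the constant $A_1$ emerges exactly as in \eqref{A1+-}, is the delicate part of the argument.
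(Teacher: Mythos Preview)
Your approach is correct and leads to the same one-step recursion as the paper, but the derivation is genuinely different. The paper does not multiply the balance equation by $w_2$ and sum. Instead it works with the shifted quantity $A_{w_1}=\sum_{l\ge 0}(l+\beta_2/\alpha_2)\,x_{w_1,l}$ --- which is exactly your $S_{w_1}+(\beta_2/\alpha_2)x_{w_1,\cdot}$, so your particular solution is built into its definition --- and obtains the \emph{homogeneous} recursion
\[
A_{w_1}=\frac{(w_1-1)\alpha_1+\beta_1}{w_1\alpha_1+\beta_1+1-\alpha_2}\,A_{w_1-1}\qquad(w_1>1)
\]
by inserting the explicit representation $x_{w_1,l}=\sum_{i=1}^{l}b^{(l)}_{w_1-1,i}x_{w_1-1,i}+b^{(l)}_{w_1,0}x_{w_1,0}$ (taken from \cite{Nstar}) and summing the Gamma-ratio coefficients $b^{(l)}$ against $(l+\beta_2/\alpha_2)$ via the identity $\sum_{i\ge 0}\Gamma(i+a)/\Gamma(i+b)=\Gamma(a)/\bigl((b-a-1)\Gamma(b-1)\bigr)$. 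Every series appearing in that computation is absolutely convergent, so no truncation argument is ever needed; the price is dependence on the auxiliary representation from \cite{Nstar}. Your route is more self-contained --- it uses only the recurrence \eqref{rekurziox(w_1,w_2)} --- but, as you correctly anticipate, it pays with the cut-off step. Your boundary estimate $\alpha_2 M^2 x_{w_1,M}=O(M^{1-(\beta_1+1)/\alpha_2})\to 0$ under $\beta_1+1>\alpha_2$ is exactly the condition guaranteeing $E_{w_1}<\infty$, so the truncation closes cleanly. The base case is treated the same way in both arguments: the paper computes $A_0$ from the closed form \eqref{gammax0l} of $x_{0,l}$ and then $A_1$ from the $w_1=1$ instance of its recursion, which after inserting $x_{1,0}=(1-r)/(\alpha_1+\beta+1)$ collapses to \eqref{A1+-}.
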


Now we turn to the conditional second moments of the asymptotic
distribution. Let
%
\begin{equation}
\label{Mwdefi} M_{w_{1}} = \sum_{l=0}^{\infty }x_{w_{1},l}
l^{2} /x_{w_{1},\cdot }
\end{equation}
be the second moment when the central weight $w_{1}$\index{central weight} is fixed.
%
\begin{prop}
\label{secondMoment}
Let $0<p<1$, $0<q<1$, $0<r<1$. Assume that $\beta _{1} +1 > 2 \alpha
_{2}$. Then for $w_{1}>1$ we have
%
\begin{align}
\label{B_M+-} M_{w_{1}} &= \dfrac{\varGamma   (2+\frac{\beta _{1}+1-2\alpha _{2}}{
\alpha _{1}}  )}{\varGamma   (2+
\frac{\beta _{1}+1}{\alpha _{1}}  )}
\dfrac{\varGamma   (w_{1}+1+\frac{
\beta _{1}+1}{\alpha _{1}}  )}{\varGamma   (w_{1}+1+\frac{\beta
_{1}+1-2\alpha _{2}}{\alpha _{1}}  )} \dfrac{B_{1}}{x_{1,\cdot }}
\nonumber
\\
&\quad  - \biggl(1+2 \frac{\beta _{2}}{\alpha _{2}} \biggr) E_{w_{1}} -
\frac{
\beta _{2}}{\alpha _{2}} \biggl(1+\frac{\beta _{2}}{\alpha _{2}} \biggr) ,
\end{align}
where
\begin{align*}
B_{1} &= \dfrac{\beta _{1}}{\alpha _{1} + \beta _{1} +1 - 2 \alpha _{2}} \dfrac{r}{\beta _{1} +1-2\alpha _{2}} \biggl(1 +
\frac{\beta _{2}}{\alpha
_{2}} \biggr) \biggl(2 + \frac{\beta _{2}}{\alpha _{2}} \biggr)
\nonumber
\\
&\quad  +\frac{\beta _{2}}{\alpha _{2}} \biggl(1 + \frac{\beta _{2}}{\alpha
_{2}} \biggr)
\dfrac{1-r}{\alpha _{1} + \beta _{1} +1 - 2 \alpha _{2}} .
\end{align*}\eject\noindent
Moreover
%
\begin{equation}
\label{B_M+++} M_{w_{1}} \sim \dfrac{B_{1}}{x_{1,\cdot }}
\dfrac{\varGamma   (2+\frac{
\beta _{1}+1-2\alpha _{2}}{\alpha _{1}}  )}{\varGamma   (2+\frac{
\beta _{1}+1}{\alpha _{1}}  )} w_{1}^{2\frac{\alpha _{2}}{\alpha
_{1}}},
\end{equation}
that is, the magnitude of $M_{w_{1}}$ 
approaches $ w_{1}^{2\frac{\alpha _{2}}{
\alpha _{1}}}$ when $w_{1}\to \infty $.
\end{prop}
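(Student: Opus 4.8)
The plan is to follow the route that produced Proposition \ref{expectation}, but to organize everything around the weighted sums $S_k(w_1):=\sum_{l=0}^{\infty} l^k x_{w_1,l}$ for $k=0,1,2$, so that $S_0(w_1)=x_{w_1,\cdot}$, $S_1(w_1)=E_{w_1}x_{w_1,\cdot}$ and $S_2(w_1)=M_{w_1}x_{w_1,\cdot}$. Everything rests on the balance (fixed-point) equations for the joint limits $x_{w_1,w_2}$ that were already used to establish Propositions \ref{marginal} and \ref{expectation}; for $w_1\ge 1,\ w_2\ge 1$ they take the form
\[x_{w_1,w_2}\bigl(1+w_1\alpha_1+\beta_1+w_2\alpha_2+\beta_2\bigr)=x_{w_1-1,w_2}\bigl((w_1-1)\alpha_1+\beta_1\bigr)+x_{w_1,w_2-1}\bigl((w_2-1)\alpha_2+\beta_2\bigr),\]
with the obvious modification (no $w_2-1$ gain term) on the boundary $w_2=0$.

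First I would multiply this identity by $w_2^{2}$ and sum over $w_2$. Writing $(j+1)^2=j^2+2j+1$ in the shifted term $\sum_j (j+1)^2 x_{w_1,j}(j\alpha_2+\beta_2)$ produces an $\alpha_2 S_3(w_1)$ contribution that cancels exactly the $\alpha_2\sum_{w_2}w_2^3 x_{w_1,w_2}$ coming from the factor $w_2\alpha_2$ in the normalization on the left. This moment-closure cancellation — the same mechanism that made the expectation computation first order — collapses the a priori infinite hierarchy to the single first-order linear recursion
\[\bigl(w_1\alpha_1+\beta_1+1-2\alpha_2\bigr)S_2(w_1)=\bigl((w_1-1)\alpha_1+\beta_1\bigr)S_2(w_1-1)+(2\beta_2+\alpha_2)S_1(w_1)+\beta_2 S_0(w_1).\]
Note the shift $-2\alpha_2$ (against $-\alpha_2$ for $S_1$); positivity of the leading coefficient, and finiteness of $S_2$ itself, is precisely what the hypothesis $\beta_1+1>2\alpha_2$ guarantees, since by \eqref{w1fixthm} the summand $x_{w_1,l}$ decays like $l^{-(1+(\beta_1+1)/\alpha_2)}$, so $\sum_l l^2 x_{w_1,l}$ converges iff $(\beta_1+1)/\alpha_2>2$.

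Next I would solve this recursion. Its homogeneous part telescopes into the Gamma ratio $\Gamma(w_1+\beta_1/\alpha_1)/\Gamma(w_1+1+(\beta_1+1-2\alpha_2)/\alpha_1)$, which, divided by the known closed form \eqref{Xw1++-} of $S_0(w_1)=x_{w_1,\cdot}$, yields exactly the factor $\Gamma(w_1+1+(\beta_1+1)/\alpha_1)/\Gamma(w_1+1+(\beta_1+1-2\alpha_2)/\alpha_1)$ together with the constant $\Gamma(2+\frac{\beta_1+1-2\alpha_2}{\alpha_1})/\Gamma(2+\frac{\beta_1+1}{\alpha_1})$ appearing in \eqref{B_M+-}. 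For the inhomogeneous part I would try the ansatz $S_2^{p}(w_1)=c_1 S_1(w_1)+c_0 S_0(w_1)$ and, using the recursions for $S_1$ and $S_0$ (the latter obtained by summing the balance equation as in Proposition \ref{marginal}), match the coefficients of $S_1(w_1)$ and $S_0(w_1)$; this forces $c_1=-(1+2\beta_2/\alpha_2)$ and $c_0=-\frac{\beta_2}{\alpha_2}(1+\frac{\beta_2}{\alpha_2})$, which are exactly the two terms subtracted in \eqref{B_M+-} once one divides by $S_0(w_1)$ and recalls $S_1/S_0=E_{w_1}$. Dividing the complete solution $S_2=\text{(homogeneous)}+S_2^{p}$ by $S_0(w_1)$ then gives \eqref{B_M+-}.

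The remaining work is to identify the multiplicative constant $B_1$, fixed by the base case $w_1=1$: here the $w_2=0$ boundary equation and the new-vertex source terms (the same ones producing the $r$/$(1-r)$ split in \eqref{X0-}, \eqref{X1.-} and in $A_1$ of \eqref{A1+-}) must be inserted, and this is where the stated two-term expression for $B_1$, with denominators $\beta_1+1-2\alpha_2$ and $\alpha_1+\beta_1+1-2\alpha_2$, comes from. I expect this base-case bookkeeping — feeding the boundary and source contributions into the recursion so that the homogeneous coefficient lands on exactly $B_1/x_{1,\cdot}$ — to be the main obstacle, rather than the recursion-solving itself. Finally, \eqref{B_M+++} follows from $\Gamma(w_1+a)/\Gamma(w_1+b)\sim w_1^{a-b}$ applied to the Gamma ratio, which is $\sim w_1^{2\alpha_2/\alpha_1}$, while the subtracted $E_{w_1}\sim w_1^{\alpha_2/\alpha_1}$ and the constant are of strictly lower order and hence negligible.
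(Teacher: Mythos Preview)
Your approach is correct and reaches the same recursion as the paper, but via a different and arguably cleaner derivation. The paper never sums the balance equation \eqref{rekurziox(w_1,w_2)} directly; instead it uses the explicit solution formula \eqref{xw1l}--\eqref{bw10} (obtained by iterating the recursion in $l$), multiplies by the \emph{shifted factorial} weight $(l+\beta_2/\alpha_2)(l+1+\beta_2/\alpha_2)$, interchanges the $i$- and $l$-sums, and evaluates the resulting $l$-sums through the Gamma identity \eqref{prudformula+}. The point of choosing that weight is that it makes the recursion for $B_{w_1}$ \emph{homogeneous} from the outset, namely $B_{w_1}=\dfrac{(w_1-1)\alpha_1+\beta_1}{w_1\alpha_1+\beta_1+1-2\alpha_2}\,B_{w_1-1}$ for $w_1>1$. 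Your route arrives at exactly the same object: your particular solution $S_2^{p}=c_1S_1+c_0S_0$ with $c_1=-(1+2\beta_2/\alpha_2)$, $c_0=-\tfrac{\beta_2}{\alpha_2}(1+\tfrac{\beta_2}{\alpha_2})$ satisfies $S_2-S_2^{p}=B_{w_1}$, so the paper's trick is precisely the change of variable that diagonalizes your inhomogeneous system. What your method buys is that it bypasses the explicit Gamma formulas \eqref{bw11i}--\eqref{bw10} entirely and makes the moment-closure mechanism transparent; what the paper's method buys is that every series manipulated is manifestly convergent. One caution on your side: the $S_3$ cancellation is formal, since under the hypothesis $\beta_1+1>2\alpha_2$ the third moment $S_3(w_1)$ may well diverge. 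To make the step rigorous you should truncate the $w_2$-sum at $L$, perform the (now finite) cancellation, and check that the boundary term $\alpha_2 L^3 x_{w_1,L}\sim L^{2-(\beta_1+1)/\alpha_2}\to 0$; this follows from \eqref{w1fixthm} and is exactly equivalent to the hypothesis.
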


Now Propositions \ref{expectation}, \ref{secondMoment} imply the main
result of this paper, that is, we obtain that Taylor's law is satisfied
asymptotically.
%
\begin{thm}
\label{mainTHM}
Let $0<p<1$, $0<q<1$, $0<r<1$. Assume that $\beta _{1} +1 > 2 \alpha
_{2}$. Then
%
\begin{equation}
M_{w_{1}} \sim C E_{w_{1}}^{2}
\end{equation}
as $w_{1}\to \infty $, where $C$ is an appropriate constant. So Taylor's
law is satisfied asymptotically with exponent $2$.
\end{thm}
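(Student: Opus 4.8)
The plan is to derive Theorem \ref{mainTHM} directly from the two preceding propositions, since all the substantive analytic work has already been done there. First I would read off from \eqref{A_E+++} the leading-order behaviour of the conditional expectation,
\[
E_{w_1} \sim a\, w_1^{\frac{\alpha_2}{\alpha_1}}, \qquad a = \frac{A_1}{x_{1,\cdot}}\,\frac{\varGamma\left(2+\frac{\beta_1+1-\alpha_2}{\alpha_1}\right)}{\varGamma\left(2+\frac{\beta_1+1}{\alpha_1}\right)},
\]
and from \eqref{B_M+++} the leading-order behaviour of the conditional second moment,
\[
M_{w_1} \sim b\, w_1^{2\frac{\alpha_2}{\alpha_1}}, \qquad b = \frac{B_1}{x_{1,\cdot}}\,\frac{\varGamma\left(2+\frac{\beta_1+1-2\alpha_2}{\alpha_1}\right)}{\varGamma\left(2+\frac{\beta_1+1}{\alpha_1}\right)}.
\]
Here $a$ and $b$ are finite and strictly positive: the hypothesis $\beta_1+1>2\alpha_2$ guarantees that every argument of the Gamma functions exceeds $2$, while $x_{1,\cdot}>0$ by Proposition \ref{marginal} and $A_1>0$ by \eqref{A1+-}.

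The decisive observation is purely arithmetical. Squaring the first asymptotic gives $E_{w_1}^2 \sim a^2\, w_1^{2\alpha_2/\alpha_1}$, and the exponent $2\alpha_2/\alpha_1$ appearing here is exactly the exponent in the asymptotics of $M_{w_1}$. Hence the powers of $w_1$ cancel in the quotient and
\[
\frac{M_{w_1}}{E_{w_1}^2} \longrightarrow \frac{b}{a^2} =: C \qquad (w_1 \to \infty),
\]
which written out yields the explicit positive constant
\[
C = \frac{B_1\, x_{1,\cdot}}{A_1^2}\,\frac{\varGamma\left(2+\frac{\beta_1+1-2\alpha_2}{\alpha_1}\right)\varGamma\left(2+\frac{\beta_1+1}{\alpha_1}\right)}{\varGamma\left(2+\frac{\beta_1+1-\alpha_2}{\alpha_1}\right)^2}.
\]
This is precisely $M_{w_1}\sim C E_{w_1}^2$, i.e. Taylor's law with exponent $2$.

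I expect no real obstacle to remain at this stage: the delicate steps — deriving the closed forms \eqref{A_E+-} and \eqref{B_M+-} from the recursion for the marginals and extracting their asymptotics via the standard ratio estimate $\varGamma(w+s)/\varGamma(w+t)\sim w^{s-t}$ — already live inside Propositions \ref{expectation} and \ref{secondMoment}. The only point demanding care is the bookkeeping that the exponent of $w_1$ in $M_{w_1}$ is \emph{exactly} twice that in $E_{w_1}$, which is what pins the Taylor exponent at $2$. As a sanity check one notes $M_{w_1}\ge E_{w_1}^2$ for every $w_1$, forcing $C\ge 1$; in the non-degenerate case $C>1$ the conditional variance satisfies $M_{w_1}-E_{w_1}^2\sim (C-1)E_{w_1}^2$, so the variance itself obeys the ecological form $V(\mu)\approx a\mu^b$ with $b=2$.
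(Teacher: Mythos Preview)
Your proof is correct and follows essentially the same route as the paper's own argument: both simply combine the asymptotics \eqref{A_E+++} and \eqref{B_M+++} from Propositions \ref{expectation} and \ref{secondMoment}, divide, and read off the constant $C = B_{1}x_{1,\cdot}/A_{1}^{2}\cdot \varGamma(2+\frac{\beta_{1}+1-2\alpha_{2}}{\alpha_{1}})\varGamma(2+\frac{\beta_{1}+1}{\alpha_{1}})/\varGamma(2+\frac{\beta_{1}+1-\alpha_{2}}{\alpha_{1}})^{2}$. Your additional remarks on positivity of $a,b$ and the observation $C\ge 1$ (hence the variance inherits the same exponent) are not in the paper but are welcome elaborations rather than a different method.
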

%
\begin{rem}
\label{rem0}
How can we observe the above Taylor's law in practice? As $x_{w_{1},w
_{2}}$ is the asymptotic joint distribution of the central weight\index{central weight} and
the peripheral weight,\index{peripheral ! weight}\index{weight peripheral} we should consider a
network large enough for asymptotic properties to show up. Fix the
central weight\index{central weight} at $w_{1}$, calculate the expectation\index{expectation} $E_{w_{1}}$ and the
second moment $M_{w_{1}}$ of the peripheral weight.\index{peripheral ! weight}\index{weight peripheral} Then we shall find
that $M_{w_{1}}$ is approximately equal to $C E_{w_{1}}^{2}$ for large
$w_{1}$. Our simulation results in Section \ref{numerical} will show a
bit more, that is, the result 
takes place for small $w_{1}$, too.
\end{rem}
%
\begin{rem}
\label{remInf}
If $\beta _{1} +1 \le 2 \alpha _{2}$, then $M_{w_{1}}$ is not finite, so
Taylor's law is not satisfied.
\end{rem}
%
\begin{rem}
\label{remII}
Now we consider the case when we interchange the roles of $w_{1}$ and
$w_{2}$. Let $w_{2}$ be fixed and let
\begin{equation*}
x_{\cdot , w_{2}} = \sum_{l=0}^{\infty }x_{l,w_{2}},
\quad E_{w_{2}} = \sum_{l=0}^{\infty }x_{l,w_{2}}
l /x_{\cdot , w_{2}}, \quad M_{w_{2}} = \sum
_{l=0}^{\infty }x_{l,w_{2}} l^{2}
/x_{\cdot , w_{2}}
\end{equation*}
be the other marginal distribution,\index{marginal distribution} conditional expectation\index{conditional expectation} and
conditional second moment. By Remark \ref{remI+}, if we interchange
subscripts $1$ and $2$ of $\alpha $ and $\beta $ (and use $r$ instead
of $1-r$), then from Proposition \ref{marginal} we obtain the
description of the behaviour of $x_{\cdot , w_{2}}$. Similarly, from
Proposition \ref{expectation} (resp. Proposition \ref{secondMoment}) we
get the appropriate results for $E_{w_{2}}$ (resp. $M_{w_{2}}$).
Finally, Theorem \ref{mainTHM} implies the following. If $\beta _{2} +1
> 2 \alpha _{1}$, then Taylor's law is satisfied asymptotically with
exponent $2$ for $E_{w_{2}}$ and $M_{w_{2}}$, too.
\end{rem}
%
\begin{rem}
\label{remIII}
For the in-degree $d_{1}$ of a vertex\index{vertex} we have $d_{1}=(N-1) w_{1}$ and
for the out-degree we have $d_{2}=w_{2}$. Let $E_{d_{1}}$ be the
conditional expectation\index{conditional expectation} of the out-degree if $d_{1}$ is fixed and let
$M_{d_{1}}$ be the conditional second moment of the out-degree if
$d_{1}$ is fixed. Then Theorem \ref{mainTHM} implies that $M_{d_{1}}
\sim {\mathrm{{const.}}} E_{d_{1}}^{2}$ as $d_{1}\to \infty $.
Similarly, Remark \ref{remII} implies that $M_{d_{2}} \sim {\mathrm{
{const.}}} E_{d_{2}}^{2}$ as $d_{2}\to \infty $. So Taylor's power law\index{power law}
is true for the in-degrees and the out-degrees.
\end{rem}

\section{Proofs and auxiliary results}%
\label{proofs} For the joint limiting distribution we have the
following result.
%
\begin{lem}[Lemma 3.2 of \cite{Nstar}]
\label{xdw}
Let $p>0$ and let $w_{1}\ge 0$,
$w_{2}\ge 0$ with $w_{1}+w_{2}\ge 1$. Then $x_{w_{1},w_{2}}$ are
positive numbers satisfying the following recurrence relation
%
\begin{align}
\label{rekurziox(w_1,w_2)} x_{1,0} &= \dfrac{1-r}{\alpha _{1} + \beta +1}, \quad
x_{0,1} = \dfrac{r}{
\alpha _{2} + \beta +1},
\nonumber
\\
x_{w_{1},w_{2}} &= \dfrac{  (\alpha _{1}   ( w_{1}-1   )
+ \beta _{1}  )x_{w_{1}-1,w_{2}} +   (\alpha _{2}   (w_{2}-1
  ) + \beta _{2}  )x_{w_{1},w_{2}-1}}{\alpha _{1} w_{1} +
\alpha _{2} w_{2} + \beta +1}
\end{align}
if $1<w_{1}+w_{2}$.
\end{lem}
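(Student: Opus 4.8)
The plan is to read the recurrence as the deterministic fixed-point equation satisfied by the almost sure limits $x_{w_{1},w_{2}}$, and to establish it by induction on $s=w_{1}+w_{2}$ using a first-moment (mean-field) recursion together with a standard stochastic-approximation lemma. As preparation I would record the exact growth of the normalising quantities: every step raises the cumulated $N$-star weight by exactly $1$, so $W_{N}(n)=1+n$; since each $N$-star contains $N-1$ sub-$(N-1)$-stars, $W_{N-1}(n)=(N-1)W_{N}(n)$; and a vertex is born with probability $p$ at each step, so $V_{n}/n\to p$ almost surely.

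The combinatorial core is to convert vertex weights into selection probabilities. For a fixed vertex $v$ of central weight $w_{1}$, the total weight of the $N$-stars centred at $v$ equals $w_{1}$ (each activation of such a star is one central event), and all $N-1$ of their sub-$(N-1)$-stars keep $v$ as centre, so the cumulated weight of $(N-1)$-stars centred at $v$ is $(N-1)w_{1}$. Hence in a preferential step the chance that $v$ is chosen as centre is $(N-1)w_{1}/W_{N-1}=w_{1}/W_{N}$ (case I/1) and $w_{1}/W_{N}$ (case II/1), while the uniform case II/2 contributes $1/V_{n}$. Symmetrically, the cumulated $N$-star weight with $v$ peripheral is $w_{2}$ and the cumulated $(N-1)$-star weight with $v$ peripheral is $(N-2)w_{2}$. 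Summing over I/1, II/1, I/2, II/2 and using $W_{N}\sim n$, $V_{n}\sim pn$, a vertex in state $(w_{1},w_{2})$ gains central weight at rate $(\alpha _{1}w_{1}+\beta _{1})/n$ and peripheral weight at rate $(\alpha _{2}w_{2}+\beta _{2})/n$ to leading order --- precisely the coefficients in \eqref{rekurziox(w_1,w_2)}; one checks that the resulting constants reproduce the $\alpha _{1},\alpha _{2},\beta _{1},\beta _{2}$ of \eqref{parameters}.

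Because there is exactly one interaction per step, a fixed vertex changes state by at most one unit, so linearity of expectation gives, for $w_{1}+w_{2}>1$,
\begin{align*}
E[X(n+1,w_{1},w_{2})\mid \mathcal{F}_{n}] &= \Bigl(1 - \tfrac{\alpha _{1} w_{1} + \alpha _{2} w_{2} + \beta }{n}\Bigr) X(n,w_{1},w_{2}) \\
&\quad + \tfrac{\alpha _{1}(w_{1}-1)+\beta _{1}}{n}\, X(n,w_{1}-1,w_{2}) + \tfrac{\alpha _{2}(w_{2}-1)+\beta _{2}}{n}\, X(n,w_{1},w_{2}-1),
\end{align*}
with out-of-range $X$ read as $0$; for the two birth states the incoming term is instead the birth rate $p(1-r)$ into $(1,0)$ and $pr$ into $(0,1)$. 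I would then apply the standard lemma: if $E[X_{n+1}\mid \mathcal{F}_{n}]=(1-c_{n}/n)X_{n}+\xi _{n}$ with bounded increments, $c_{n}\to c>0$ and $\xi _{n}\to \xi$ a.s., then $X_{n}/n\to \xi /(1+c)$ a.s. The base cases yield $x_{1,0}=(1-r)/(\alpha _{1}+\beta +1)$ and $x_{0,1}=r/(\alpha _{2}+\beta +1)$ after dividing by $V_{n}/n\to p$. For the inductive step, the hypothesis supplies $X(n,w_{1}-1,w_{2})/n\to p\,x_{w_{1}-1,w_{2}}$ and $X(n,w_{1},w_{2}-1)/n\to p\,x_{w_{1},w_{2}-1}$, whence $c=\alpha _{1}w_{1}+\alpha _{2}w_{2}+\beta$ and $\xi =p[(\alpha _{1}(w_{1}-1)+\beta _{1})x_{w_{1}-1,w_{2}}+(\alpha _{2}(w_{2}-1)+\beta _{2})x_{w_{1},w_{2}-1}]$; dividing $X/n\to \xi /(1+c)$ by $p$ reproduces \eqref{rekurziox(w_1,w_2)}. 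Positivity is then immediate by induction, since the coefficients are non-negative and at least one incoming term is strictly positive.

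The \emph{main obstacle} is making the mean-field heuristic rigorous. The transition probabilities carry the random denominators $V_{n}$ and $W_{N}$ together with lower-order corrections, so the two delicate points are (i) proving the vertex-to-star weight identities exactly, so that the preferential rates collapse to the clean forms $w_{1}/W_{N}$ and $w_{2}/W_{N}$, and (ii) verifying the hypotheses of the convergence lemma --- boundedness of increments and almost sure convergence of $\xi _{n}$ --- which is exactly where the induction on $w_{1}+w_{2}$ and the a.s. convergence $V_{n}/n\to p$ do the work.
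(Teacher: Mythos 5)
Your proposal cannot be compared with a proof inside this paper, because the paper contains none: Lemma \ref{xdw} is imported verbatim, with attribution, from Lemma 3.2 of \cite{Nstar}, and is used here as a black box. What you have written is, in substance, a reconstruction of the proof given in that cited source: there too the argument converts star weights into vertex-selection probabilities (your identities are exactly right --- the cumulated $N$-star weight is $n+1$, the cumulated $(N-1)$-star weight is $(N-1)$ times that, the stars centred at a vertex of central weight $w_{1}$ carry total weight $w_{1}$ resp. $(N-1)w_{1}$, and the peripheral counts $w_{2}$ resp. $(N-2)w_{2}$), writes the one-step conditional-expectation recursion for $X(n,w_{1},w_{2})$, and closes it by induction on $w_{1}+w_{2}$ using a martingale-based convergence lemma of precisely the type you quote (the standard tool of \cite{BaMo2,FIPB2}), with the birth terms $p(1-r)$ into $(1,0)$ and $pr$ into $(0,1)$ supplying the base cases. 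Your rate computations correctly reproduce the parameters \eqref{parameters}, and the fixed-point algebra $x=\xi /\bigl(p(1+c)\bigr)$ yields the stated recurrence, including the initial values $x_{1,0}$ and $x_{0,1}$. Two small points to tighten: the displayed conditional-expectation identity holds only up to the random corrections you yourself flag ($V_{n}$ in place of $pn$, and $n+1$ in place of $n$), so it must be formulated with random coefficients $c_{n}\to c$ and $\xi _{n}\to \xi $ almost surely rather than with the constants $\alpha _{i},\beta _{i}$ appearing exactly; and positivity of all $x_{w_{1},w_{2}}$ does not follow from the hypothesis $p>0$ alone as the lemma's wording suggests (for instance $x_{0,1}=0$ if $r=0$), so your induction for positivity implicitly uses the standing assumptions $0<r<1$, $0<q<1$ under which $\alpha _{1},\alpha _{2},\beta _{1},\beta _{2}>0$ --- the same assumptions in force in the rest of the paper.
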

Throughout the proof we shall use the following facts on the
$\varGamma $-function.
%
\begin{equation}
\label{prudformula} \sum_{i=0}^{n}
\dfrac{\varGamma (i+a)}{\varGamma (i+b)}= \dfrac{1}{a-b+1} \biggl[\dfrac{\varGamma (n+a+1)}{\varGamma (n+b)}-
\dfrac{
\varGamma (a)}{\varGamma (b-1)} \biggr],
\end{equation}
see \cite{Prud}. Stirling's formula implies that
%
\begin{equation}
\label{GSTR} \dfrac{\varGamma   (n+a  )}{\varGamma   (n+b  )} \sim n^{-(b-a)}.
\end{equation}
The above two formulae imply that
%
\begin{equation}
\label{prudformula+} \sum_{i=0}^{\infty }
\dfrac{\varGamma (i+a)}{\varGamma (i+b)}= \frac{1}{b-a-1} \dfrac{\varGamma (a)}{\varGamma (b-1)}
\end{equation}
if $b>a+1$. The following facts on $x_{w_{1},w_{2}}$ will be useful.
%
\begin{lem}[See the proof of Theorem 3.2 of \cite{Nstar}]
Let $w_{1}=0$, then
%
\begin{gather}
\label{x01} x_{0,1}=\dfrac{r}{\alpha _{2}+\beta +1}>0,
%
\\
\label{rek0l} x_{0,l}=\dfrac{1}{l\alpha _{2}+\beta +1} \bigl((l-1)\alpha
_{2}+\beta _{2} \bigr)x_{0,l-1}, \quad
l>1,
\end{gather}
and
%
\begin{equation}
\label{gammax0l} x_{0,l} = \dfrac{r}{\alpha _{2}}\dfrac{\varGamma   (1+\frac{\beta
+1}{\alpha _{2}}  )}{\varGamma   (1+\frac{\beta _{2}}{\alpha
_{2}}  )}
\dfrac{\varGamma   (l+\frac{\beta _{2}}{\alpha _{2}}  )}{
\varGamma   (l+\frac{\alpha _{2}+\beta +1}{\alpha _{2}}  )} = C(0) \dfrac{\varGamma   (l+\frac{\beta _{2}}{\alpha _{2}}  )}{
\varGamma   (l+\frac{\alpha _{2}+\beta +1}{\alpha _{2}}  )}.
\end{equation}
When $w_{2}=0$, then we have
%
\begin{gather}
\label{x10} x_{1,0}=\dfrac{1-r}{\alpha _{1}+\beta +1}>0,
%
\\
\label{rekk0} x_{k,0}=\dfrac{1}{k\alpha _{1}+\beta +1} \bigl((k-1)\alpha
_{1}+\beta _{1} \bigr)x_{k-1,0}, \quad
k>1,
\end{gather}
and
%
\begin{equation}
\label{xk0asmp} x_{k,0} = \dfrac{(1-r)\varGamma   (1+\frac{\beta +1}{\alpha _{1}}  )}{
\alpha _{1}\varGamma   (1+\frac{\beta _{1}}{\alpha _{1}}  )}
\dfrac{
\varGamma   (k+\frac{\beta _{1}}{\alpha _{1}}  )}{\varGamma
  (k+\frac{\alpha _{1}+\beta +1}{\alpha _{1}}  )} = A(0) \dfrac{
\varGamma   (k+\frac{\beta _{1}}{\alpha _{1}}  )}{\varGamma
  (k+\frac{\alpha _{1}+\beta +1}{\alpha _{1}}  )}.
\end{equation}
If $w_{1} >0$ and $l>0$, then
%
\begin{equation}
\label{xw1l} x_{w_{1},l} =\sum_{i=1}^{l}b_{w_{1}-1,i}^{(l)}x_{w_{1}-1,i}+b_{w_{1},0}
^{(l)}x_{w_{1},0},
\end{equation}
where
%
\begin{equation}
\label{bw11i} b_{w_{1}-1,i}^{(l)}=\dfrac{(w_{1}-1)\alpha _{1}+\beta _{1}}{\alpha _{2}}
\dfrac{
\varGamma   (l+\frac{\beta _{2}}{\alpha _{2}}  )}{\varGamma
  (l+1+\frac{w_{1}\alpha _{1}+\beta +1}{\alpha _{2}}  )}\dfrac{
\varGamma   (i+\frac{w_{1}\alpha _{1}+\beta +1}{\alpha _{2}}  )}{
\varGamma   (i+\frac{\beta _{2}}{\alpha _{2}}  )},
\end{equation}
for $1\leq i\leq l$, and
%
\begin{equation}
\label{bw10} b_{w_{1},0}^{(l)}= \dfrac{\varGamma   (1+\frac{w_{1}\alpha _{1}+
\beta +1}{\alpha _{2}}  )}{\varGamma   (\frac{\beta _{2}}{
\alpha _{2}}  )}
\dfrac{\varGamma   (l+\frac{\beta _{2}}{\alpha
_{2}}  )}{\varGamma   (l+1+\frac{w_{1}\alpha _{1}+\beta +1}{
\alpha _{2}}  )}.
\end{equation}
\end{lem}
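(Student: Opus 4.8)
The plan is to derive every displayed identity of the lemma from the single recurrence relation \eqref{rekurziox(w_1,w_2)} of Lemma \ref{xdw}, together with the two base values and the functional equation $\Gamma(x+1)=x\Gamma(x)$. Throughout I adopt the convention that any $x_{w_1,w_2}$ carrying a negative index equals $0$, so that \eqref{rekurziox(w_1,w_2)} specialises cleanly on the boundary of the lattice. The displays \eqref{x01} and \eqref{x10} are then nothing but the base cases already recorded in Lemma \ref{xdw} and require no work.

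For the edge sequence $x_{0,l}$ I would put $w_1=0$ in \eqref{rekurziox(w_1,w_2)}: the term carrying $x_{-1,l}$ vanishes by the convention and the denominator becomes $\alpha_2 l+\beta+1$, which is precisely the one-step recurrence \eqref{rek0l}. Iterating \eqref{rek0l} down to the base value $x_{0,1}$ produces a finite product; dividing numerator and denominator of each factor by $\alpha_2$ it reads
\begin{equation*}
x_{0,l}=x_{0,1}\prod_{j=2}^{l}\frac{(j-1)+\frac{\beta_2}{\alpha_2}}{\,j+\frac{\beta+1}{\alpha_2}\,}.
\end{equation*}
Both products telescope into ratios of Gamma values through $\Gamma(x+1)=x\Gamma(x)$, and after inserting $x_{0,1}=\tfrac{r}{\alpha_2+\beta+1}$ and simplifying the constant the expression collapses to \eqref{gammax0l}, the prefactor being exactly $C(0)$ from \eqref{Cform}. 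The companion family \eqref{rekk0}--\eqref{xk0asmp} for $x_{k,0}$ follows by the identical argument with $w_2=0$, or simply by invoking the symmetry of Remark \ref{remI+} (interchange the subscripts $1\leftrightarrow2$ and replace $r$ by $1-r$).

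The substantial step is \eqref{xw1l}. Here I would fix $w_1>0$ and read \eqref{rekurziox(w_1,w_2)} as a \emph{first-order linear recurrence in $l$} for the sequence $l\mapsto x_{w_1,l}$, in which the previous row $(x_{w_1-1,i})$ enters as an inhomogeneous source with the constant coefficient $c=\alpha_1(w_1-1)+\beta_1$. I solve it by the summation-factor (integrating-factor) method: choose $Q_l$ with $Q_0=1$ and $Q_l/Q_{l-1}=\bigl(\alpha_1 w_1+\alpha_2 l+\beta+1\bigr)/\bigl(\alpha_2(l-1)+\beta_2\bigr)$, and set $P_l=Q_{l-1}/(\alpha_2(l-1)+\beta_2)$. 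Multiplying the recurrence by $P_l$ turns its homogeneous part into the exact difference $Q_l x_{w_1,l}-Q_{l-1}x_{w_1,l-1}$, so summing from $1$ to $l$ gives
\begin{equation*}
x_{w_1,l}=\frac{Q_0}{Q_l}\,x_{w_1,0}+\frac{c}{Q_l}\sum_{i=1}^{l}P_i\,x_{w_1-1,i}.
\end{equation*}
This already has the shape of \eqref{xw1l}, with $b_{w_1,0}^{(l)}=Q_0/Q_l$ and $b_{w_1-1,i}^{(l)}=cP_i/Q_l$. It then remains to evaluate the products defining $Q_l$ and $P_i$ in closed form: after normalising each factor by $\alpha_2$ the telescoping products turn into the Gamma ratios appearing in \eqref{bw10} and \eqref{bw11i}.

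The main obstacle is this last identification, and in particular the factor $1/\alpha_2$ in \eqref{bw11i}. It emerges once one writes $\alpha_2(i-1)+\beta_2=\alpha_2\bigl(i-1+\tfrac{\beta_2}{\alpha_2}\bigr)$ and absorbs the bracket into the Gamma function via $\bigl(i-1+\tfrac{\beta_2}{\alpha_2}\bigr)\Gamma\bigl(i-1+\tfrac{\beta_2}{\alpha_2}\bigr)=\Gamma\bigl(i+\tfrac{\beta_2}{\alpha_2}\bigr)$; the careful bookkeeping of the product ranges (a shift by one in the denominator Gamma argument, where $\tfrac{\alpha_2+\beta+1}{\alpha_2}=1+\tfrac{\beta+1}{\alpha_2}$) is where slips are easiest. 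Once the two coefficient formulas are matched, \eqref{xw1l} is established and the lemma is complete; as a final sanity check I would verify the case $l=1$, where the inhomogeneous sum reduces to a single term, directly against \eqref{rekurziox(w_1,w_2)}.
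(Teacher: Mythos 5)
Your proposal is correct: iterating \eqref{rekurziox(w_1,w_2)} along the boundary rows (with the convention that negatively indexed terms vanish) gives \eqref{rek0l} and \eqref{rekk0}, the telescoping products collapse via $\varGamma(x+1)=x\varGamma(x)$ to \eqref{gammax0l} and \eqref{xk0asmp} with the correct constants $C(0)$, $A(0)$, and your summation-factor solution yields $b_{w_{1},0}^{(l)}=1/Q_{l}$ and $b_{w_{1}-1,i}^{(l)}=cP_{i}/Q_{l}$, which I have checked reduce exactly to \eqref{bw10} and \eqref{bw11i}, including the $1/\alpha _{2}$ factor and the shift $\frac{\alpha _{2}+\beta +1}{\alpha _{2}}=1+\frac{\beta +1}{\alpha _{2}}$ you flag as the delicate point. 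The paper itself offers no proof here — the lemma is imported from the proof of Theorem 3.2 of \cite{Nstar} — and your derivation from the recurrence of Lemma \ref{xdw} is that argument in substance, with the minor stylistic difference that the integrating factor \emph{constructs} the closed-form coefficients rather than verifying them by induction on $l$.
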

Now we turn to the proofs of the new results. First we deal with
the marginal distribution.\index{marginal distribution}
\begin{proof}[Proof of Proposition \ref{marginal}]
To calculate the marginal distribution $ x_{w_{1},\cdot } = \sum_{l=0}
^{\infty }x_{w_{1},l}$\index{marginal distribution} we shall use mathematical induction. So first
consider $x_{0,\cdot }$. Because $x_{0,0}=0$, by equation
\eqref{gammax0l} we have
\begin{align*}
x_{0,\cdot } &= \sum_{l=1}^{\infty }x_{0,l}
= \dfrac{r}{\alpha _{2}}\dfrac{
\varGamma   (1+\frac{\beta +1}{\alpha _{2}}  )}{\varGamma
  (1+\frac{\beta _{2}}{\alpha _{2}}  )} \sum_{l=1}^{\infty }
\dfrac{
\varGamma   (l+\frac{\beta _{2}}{\alpha _{2}}  ) }{\varGamma
  (l+\frac{\alpha _{2}+\beta +1}{\alpha _{2}}  )}
%
\\
&= \dfrac{r}{\alpha _{2}}\dfrac{\varGamma   (1+\frac{\beta +1}{
\alpha _{2}}  )}{\varGamma   (1+\frac{\beta _{2}}{\alpha _{2}}  )} \sum_{l=0}^{\infty }
\dfrac{\varGamma   (l+1+\frac{\beta _{2}}{
\alpha _{2}}  ) }{\varGamma   (l+2 +\frac{\beta +1}{\alpha
_{2}}  )}.
\end{align*}
By \eqref{prudformula+}, the sum in the above formula is always finite,
and we have
%
\begin{equation}
\label{X0} x_{0,\cdot }= \dfrac{r}{\alpha _{2}}\dfrac{\varGamma   (1+\frac{
\beta +1}{\alpha _{2}}  )}{\varGamma   (1+\frac{\beta _{2}}{
\alpha _{2}}  )}
\dfrac{\alpha _{2}}{\beta _{1} +1} \dfrac{\varGamma
  (1+\frac{\beta _{2}}{\alpha _{2}}  )}{\varGamma   (1+\frac{
\beta +1}{\alpha _{2}}  )} = \dfrac{r}{\beta _{1} +1} .
\end{equation}

For $w_{1} >0$, by \eqref{xw1l}, we have
%
\begin{align}
\label{Xw1} x_{w_{1},\cdot } & = \sum_{l=1}^{\infty }
\sum_{i=1}^{l} b_{w_{1}-1,i}
^{(l)}x_{w_{1}-1,i} + \sum_{l=1}^{\infty }b_{w_{1},0}^{(l)}
x_{w_{1},0} + x_{w_{1},0}
\nonumber
\\
& = \sum_{i=1}^{\infty }x_{w_{1}-1,i}
\sum_{l=i}^{\infty }b_{w_{1}-1,i}
^{(l)} + x_{w_{1},0} \sum_{l=1}^{\infty }b_{w_{1},0}^{(l)}
+ x_{w_{1},0} .
\end{align}
The coefficients $b_{k,i}$ satisfy formulae \eqref{bw11i} and
\eqref{bw10}. Therefore we shall use \eqref{prudformula+} for the two
sums in the above expression. We can see that both sums are always
finite and
\begin{align*}
&\sum_{l=i}^{\infty }  b_{w_{1}-1,i}^{(l)}
\\*
&\quad =  \dfrac{(w_{1}-1)\alpha _{1}+\beta _{1}}{\alpha _{2}} \dfrac{\varGamma
  (i+\frac{w_{1}\alpha _{1}+\beta +1}{\alpha _{2}}  )}{\varGamma
  (i+\frac{\beta _{2}}{\alpha _{2}}  )} \sum
_{l=0}^{\infty }\dfrac{
\varGamma   (l+i+\frac{\beta _{2}}{\alpha _{2}}  )}{\varGamma
  (l+i+1+\frac{w_{1}\alpha _{1}+\beta +1}{\alpha _{2}}  )}
\\
&\quad = \dfrac{(w_{1}-1)\alpha _{1}+\beta _{1}}{\alpha _{2}} \dfrac{\varGamma
  (i+\frac{w_{1}\alpha _{1}+\beta +1}{\alpha _{2}}  )}{\varGamma
  (i+\frac{\beta _{2}}{\alpha _{2}}  )} \frac{\alpha _{2}}{w
_{1} \alpha _{1} + \beta _{1} +1}
\dfrac{\varGamma   (i+\frac{\beta
_{2}}{\alpha _{2}}  )}{\varGamma   (i+\frac{w_{1}\alpha _{1}+
\beta +1}{\alpha _{2}}  )}
\\
&\quad = \dfrac{(w_{1}-1)\alpha _{1}+\beta _{1}}{w_{1} \alpha _{1} + \beta
_{1} +1 } .
\end{align*}
For the other sum, a similar calculation shows that
\begin{equation*}
\sum_{l=1}^{\infty }b_{w_{1},0}^{(l)}
= \dfrac{\beta _{2}}{w_{1} \alpha
_{1} + \beta _{1} +1} .
\end{equation*}
Insert these expressions into \eqref{Xw1} to obtain
%
\begin{equation}
\label{Xw1+} x_{w_{1},\cdot } = \sum_{i=1}^{\infty }x_{w_{1}-1,i}
\dfrac{(w_{1}-1)
\alpha _{1}+\beta _{1}}{w_{1} \alpha _{1} + \beta _{1} +1 } + x_{w_{1},0} \dfrac{\beta _{2}}{w_{1} \alpha _{1} + \beta _{1} +1 } +
x_{w_{1},0} .
\end{equation}
From this point we should proceed carefully, as we should distinguish
the case of $x_{1,0}$ and the case of $x_{w_{1},0}$ for $w_{1}>1$. From
equation \eqref{X0} $x_{0,\cdot }=\dfrac{r}{\beta _{1} +1} $, from
equation \eqref{x10} $x_{1,0}=\dfrac{1-r}{\alpha _{1}+\beta +1}$ and
$x_{0,0}= 0$, so equation \eqref{Xw1+} gives that
%
\begin{align}
 x_{1,\cdot } & = \dfrac{\beta _{1}}{\alpha _{1} + \beta _{1} +1 } x_{0,
\cdot }
+ x_{1,0} \biggl( \dfrac{\beta _{2}}{\alpha _{1} + \beta _{1} +1
} +1 \biggr) \nonumber
\\
& = \dfrac{\beta _{1}}{\alpha _{1} + \beta _{1} +1 } \dfrac{r}{\beta
_{1} +1} + \dfrac{1-r}{\alpha _{1}+\beta +1}
\dfrac{\alpha _{1}+\beta +1}{
\alpha _{1} + \beta _{1} +1 }
\nonumber
\\
& = \dfrac{\beta _{1}}{\alpha _{1} + \beta _{1} +1 } \biggl( \dfrac{r}{
\beta _{1} +1} + \dfrac{1-r}{\beta _{1}}
\biggr).
\label{X1.}
\end{align}
For $w_{1}>1$ equation \eqref{rekk0} gives us $x_{w_{1},0}=\dfrac{(w
_{1}-1)\alpha _{1}+\beta _{1}}{w_{1}\alpha _{1}+\beta +1} x_{w_{1}-1,0}$,
so equation \eqref{Xw1+} implies
\begin{equation*}
x_{w_{1},\cdot } = \dfrac{(w_{1}-1)\alpha _{1}+\beta _{1}}{w_{1} \alpha
_{1} + \beta _{1} +1 } \sum_{i=0}^{\infty }x_{w_{1}-1,i}
= \dfrac{(w
_{1}-1)\alpha _{1}+\beta _{1}}{w_{1} \alpha _{1} + \beta _{1} +1 } x_{w
_{1}-1,\cdot } .
\end{equation*}
Therefore, using 
\eqref{X1.} and recursion, for $w_{1}>1$ we obtain
that
%
\begin{align}
x_{w_{1},\cdot } & = \dfrac{(w_{1}-1)\alpha _{1}+\beta _{1}}{w_{1}
\alpha _{1} + \beta _{1} +1 } x_{w_{1}-1,\cdot }
= \prod_{k=2}^{w_{1}} \dfrac{k-1+ \frac{\beta _{1}}{\alpha _{1}}}{k + \frac{\beta _{1} +1 }{
\alpha _{1}}}
x_{1,\cdot } \nonumber
\\
& = \dfrac{\varGamma   (w_{1}+\frac{\beta _{1}}{\alpha _{1}}  )}{
\varGamma   (1+\frac{\beta _{1}}{\alpha _{1}}  )} \dfrac{
\varGamma   (2+\frac{\beta _{1}+1}{\alpha _{1}}  )}{\varGamma
  (w_{1}+1+\frac{\beta _{1}+1}{\alpha _{1}}  )} \dfrac{\beta
_{1}}{\alpha _{1} + \beta _{1} +1 } \biggl(
\dfrac{r}{\beta _{1} +1} + \dfrac{1-r}{
\beta _{1}} \biggr)
\nonumber
\\
& = \dfrac{\varGamma   (w_{1}+\frac{\beta _{1}}{\alpha _{1}}  )}{
\varGamma   (\frac{\beta _{1}}{\alpha _{1}}  )} \dfrac{\varGamma
  (1+\frac{\beta _{1}+1}{\alpha _{1}}  )}{\varGamma   (w
_{1}+1+\frac{\beta _{1}+1}{\alpha _{1}}  )} \dfrac{1-r+\beta _{1}}{
\beta _{1}(\beta _{1} +1)} .
\label{Xw1++}
\end{align}

Now we check if the sum of the values of $x_{w_{1},\cdot } $ is equal
to $1$.
\begin{align*}
x_{0,\cdot } + x_{1,\cdot } + \sum_{w_{1}=2}^{\infty }x_{w_{1},\cdot
}
&= \dfrac{r}{\beta _{1} +1} + \dfrac{\beta _{1}}{\alpha _{1} + \beta
_{1} +1 } \biggl( \dfrac{r}{\beta _{1} +1} +
\dfrac{1-r}{\beta _{1}} \biggr)
\\
&\quad + \sum_{w_{1}=2}^{\infty }\dfrac{\varGamma   (w_{1}+\frac{\beta
_{1}}{\alpha _{1}}  )}{\varGamma   (\frac{\beta _{1}}{\alpha
_{1}}  )}
\dfrac{\varGamma   (1+\frac{\beta _{1}+1}{\alpha
_{1}}  )}{\varGamma   (w_{1}+1+
\frac{\beta _{1}+1}{\alpha _{1}}  )} \dfrac{1-r+\beta _{1}}{\beta
_{1}(\beta _{1} +1)} .
\end{align*}
Here
\begin{equation*}
\sum_{w_{1}=2}^{\infty }\dfrac{\varGamma   (w_{1}+\frac{\beta _{1}}{
\alpha _{1}}  )}{\varGamma   (w_{1}+1+\frac{\beta _{1}+1}{
\alpha _{1}}  )} =
\alpha _{1} \dfrac{\varGamma   (2+\frac{
\beta _{1}}{\alpha _{1}}  )}{\varGamma   (2+\frac{\beta _{1}+1}{
\alpha _{1}}  )} .
\end{equation*}
Therefore, after some calculation, we get
%
\begin{equation}
\label{sum1} \sum_{w_{1}=0}^{\infty }x_{w_{1},\cdot }
=1,
\end{equation}
so we have a proper distribution.
\end{proof}
Now we consider the expectation.\index{expectation}
\begin{proof}[Proof of Proposition \ref{expectation}]
We calculate
\begin{equation*}
E_{w_{1}} = \sum_{l=0}^{\infty }x_{w_{1},l}
l /x_{w_{1},\cdot }
\end{equation*}
that is, the expectation\index{expectation} when the central weight $w_{1}$\index{central weight} is fixed. We
shall calculate the value of
%
\begin{equation}
\label{Awdefi} A_{w_{1}} = \sum_{l=0}^{\infty }x_{w_{1},l}
\biggl(l + \frac{\beta
_{2}}{\alpha _{2}} \biggr) .
\end{equation}
As
%
\begin{equation}
\label{A_E} A_{w_{1}} = x_{w_{1},\cdot } \biggl(
E_{w_{1}} +\frac{\beta _{2}}{
\alpha _{2}} \biggr) ,
\end{equation}
using $A_{w_{1}}$, we shall find the value of $E_{w_{1}}$.

Let us start with $A_{0}$. Because $x_{0,0}=0$, and using equation
\eqref{gammax0l}, we have
\begin{align*}
A_{0} & = \sum_{l=1}^{\infty }x_{0,l}
\biggl(l + \frac{\beta _{2}}{
\alpha _{2}} \biggr) = \dfrac{r}{\alpha _{2}}
\dfrac{\varGamma   (1+\frac{
\beta +1}{\alpha _{2}}  )}{\varGamma   (1+\frac{\beta _{2}}{
\alpha _{2}}  )} \sum_{l=1}^{\infty }
\dfrac{\varGamma   (l+\frac{
\beta _{2}}{\alpha _{2}}  )   (l +
\frac{\beta _{2}}{\alpha _{2}}   )}{\varGamma   (l+\frac{\alpha
_{2}+\beta +1}{\alpha _{2}}  )}
\\
& = \dfrac{r}{\alpha _{2}}\dfrac{\varGamma   (1+\frac{\beta +1}{
\alpha _{2}}  )}{\varGamma   (1+\frac{\beta _{2}}{\alpha _{2}}  )} \sum
_{l=0}^{\infty }\dfrac{\varGamma   (l+2+\frac{\beta _{2}}{
\alpha _{2}}  ) }{\varGamma   (l+2 +\frac{\beta +1}{\alpha
_{2}}  )}.
\end{align*}
By \eqref{prudformula+}, the sum in the above formula is always finite,
moreover
%
\begin{equation}
\label{A0} A_{0} = \dfrac{r}{\beta _{1} +1-\alpha _{2}}
\dfrac{\varGamma   (2+\frac{
\beta _{2}}{\alpha _{2}}  )}{\varGamma   (1+\frac{\beta _{2}}{
\alpha _{2}}  )} = \dfrac{r}{\beta _{1} +1-\alpha _{2}} \biggl(1+\frac{
\beta _{2}}{\alpha _{2}}
\biggr).
\end{equation}

If $w_{1} >0$, then by \eqref{xw1l} we have
%
\begin{align}
\label{Aw1}  A_{w_{1}} &=
 \sum_{l=1}^{\infty }\sum
_{i=1}^{l} b_{w_{1}-1,i}^{(l)}x_{w_{1}-1,i}
\biggl(l + \frac{\beta _{2}}{\alpha _{2}} \biggr) + \sum_{l=1}^{\infty
}b_{w_{1},0}^{(l)}
x_{w_{1},0} \biggl(l + \frac{\beta _{2}}{\alpha _{2}} \biggr) + x_{w_{1},0}
\frac{\beta _{2}}{
\alpha _{2}}
\nonumber
\\
&  = \sum_{i=1}^{\infty }x_{w_{1}-1,i}
\sum_{l=i}^{\infty }b_{w_{1}-1,i}
^{(l)} \biggl(l + \frac{\beta _{2}}{\alpha _{2}} \biggr) + x_{w_{1},0}
\sum_{l=1}^{\infty }b_{w_{1},0}^{(l)}
\biggl(l + \frac{\beta _{2}}{
\alpha _{2}} \biggr) + x_{w_{1},0}
\frac{\beta _{2}}{\alpha _{2}}.
\end{align}
We know that the coefficients $b_{k,i}$ satisfy formulae
\eqref{bw11i} and \eqref{bw10}. Therefore we can apply
\eqref{prudformula+} for the first two terms in the above expression.
So we can see that both sums are always finite and
\begin{align*}
& \sum_{l=i}^{\infty }b_{w_{1}-1,i}^{(l)}
\biggl(l + \frac{\beta _{2}}{
\alpha _{2}} \biggr)
\\
&\quad  = \dfrac{(w_{1}-1)\alpha _{1}+\beta _{1}}{\alpha _{2}} \dfrac{\varGamma
  (i+\frac{w_{1}\alpha _{1}+\beta +1}{\alpha _{2}}  )}{\varGamma
  (i+\frac{\beta _{2}}{\alpha _{2}}  )} \sum
_{l=0}^{\infty }\dfrac{
\varGamma   (l+i+1+\frac{\beta _{2}}{\alpha _{2}}  )}{\varGamma
  (l+i+1+\frac{w_{1}\alpha _{1}+\beta +1}{\alpha _{2}}  )}
\\
&\quad  = \dfrac{(w_{1}-1)\alpha _{1}+\beta _{1}}{\alpha _{2}} \dfrac{\varGamma
  (i+\frac{w_{1}\alpha _{1}+\beta +1}{\alpha _{2}}  )}{\varGamma
  (i+\frac{\beta _{2}}{\alpha _{2}}  )}
\\
&\qquad \times \frac{\alpha _{2}}{w_{1} \alpha _{1} + \beta _{1} +1 -\alpha _{2}} \dfrac{\varGamma   (i+1+\frac{\beta _{2}}{\alpha _{2}}  )}{
\varGamma   (i+\frac{w_{1}\alpha _{1}+\beta +1}{\alpha _{2}}  )}
\\
&\quad  = \dfrac{(w_{1}-1)\alpha _{1}+\beta _{1}}{w_{1} \alpha _{1} + \beta
_{1} +1 - \alpha _{2}} \biggl( i+\frac{\beta _{2}}{\alpha _{2}} \biggr).
\end{align*}
Similarly
\begin{equation*}
\sum_{l=1}^{\infty }b_{w_{1},0}^{(l)}
\biggl(l + \frac{\beta _{2}}{
\alpha _{2}} \biggr) = \dfrac{\alpha _{2}}{w_{1} \alpha _{1} + \beta
_{1} +1 -\alpha _{2}}
\dfrac{\varGamma   (2+\frac{\beta _{2}}{\alpha
_{2}}  )}{\varGamma   (\frac{\beta _{2}}{\alpha _{2}}  )} .
\end{equation*}
Using these expressions, from \eqref{Aw1} we get
%
\begin{align}
 A_{w_{1}} & = \sum_{i=1}^{\infty }x_{w_{1}-1,i}
\biggl(i + \frac{\beta
_{2}}{\alpha _{2}} \biggr) \dfrac{(w_{1}-1)\alpha _{1}+\beta _{1}}{w
_{1} \alpha _{1} + \beta _{1} +1 - \alpha _{2}}  \nonumber
\\
& \quad  + x_{w_{1},0} \dfrac{\alpha _{2}}{w_{1} \alpha _{1} + \beta _{1} +1
- \alpha _{2}} \frac{\beta _{2}}{\alpha _{2}} \biggl(1 +
\frac{\beta _{2}}{
\alpha _{2}} \biggr) + x_{w_{1},0} \frac{\beta _{2}}{\alpha _{2}} .
\label{Aw1+}
\end{align}
Now we should distinguish the case of $w_{1}=1$ and the case of
$w_{1}>1$. For $w_{1}=1$ we use that from equation \eqref{x10}
$x_{1,0}=\dfrac{1-r}{\alpha _{1}+\beta +1}$ and $x_{0,0}= 0$, so equation
\eqref{Aw1+} implies that
%
\begin{align}
A_{1} &=
A_{0} \dfrac{\beta _{1}}{\alpha _{1} + \beta _{1} +1 - \alpha _{2}} + x_{1,0}
\dfrac{\alpha _{2}}{\alpha _{1} + \beta _{1} +1 - \alpha _{2}} \frac{
\beta _{2}}{\alpha _{2}} \biggl(1 + \frac{\beta _{2}}{\alpha _{2}}
\biggr) + x_{1,0} \frac{\beta _{2}}{\alpha _{2}}
\nonumber
\\
&  = \dfrac{r}{\beta _{1}+1-\alpha _{2}} \biggl( 1+ \dfrac{\beta _{2}}{
\alpha _{2}} \biggr)
\dfrac{\beta _{1}}{\alpha _{1} + \beta _{1} +1 -
\alpha _{2}}
\nonumber
\\
&\quad  + \dfrac{1-r}{\alpha _{1}+\beta +1} \dfrac{\beta _{2}}{\alpha _{2}} \dfrac{
\alpha _{1}+\beta +1}{\alpha _{1} + \beta _{1} +1 - \alpha _{2}}
\nonumber
\\
&  = \dfrac{r}{\beta _{1}+1-\alpha _{2}} \biggl( 1+ \dfrac{\beta _{2}}{
\alpha _{2}} \biggr)
\dfrac{\beta _{1}}{\alpha _{1} + \beta _{1} +1 -
\alpha _{2}} + (1-r)\dfrac{\beta _{2}}{\alpha _{2}} \dfrac{1}{\alpha _{1}
+ \beta _{1} +1 - \alpha _{2}}.
\label{A1+}
\end{align}
For $w_{1}>1$ we know that $x_{w_{1},0} = \frac{(w_{1}-1)\alpha _{1}+
\beta _{1}}{w_{1} \alpha _{1} + \beta +1 } x_{w_{1}-1,0}$, therefore
equation \eqref{Aw1+} implies that
\begin{equation*}
A_{w_{1}} = \dfrac{(w_{1}-1)\alpha _{1}+\beta _{1}}{w_{1} \alpha _{1} +
\beta _{1} +1 - \alpha _{2}} \sum_{i=0}^{\infty }x_{w_{1}-1,i}
\biggl(i + \frac{\beta _{2}}{\alpha _{2}} \biggr) .
\end{equation*}
From this equation we obtain that
%
\begin{align}
 A_{w_{1}} & = \dfrac{(w_{1}-1)\alpha _{1}+\beta _{1}}{w_{1} \alpha _{1} +
\beta _{1} +1 - \alpha _{2}} A_{w_{1}-1}
= \prod_{k=2}^{w_{1}} \dfrac{k-1+
\frac{\beta _{1}}{\alpha _{1}}}{k + \frac{\beta _{1} +1 - \alpha _{2}}{
\alpha _{1}}}
A_{1} \nonumber
\\
& = \dfrac{\varGamma   (w_{1}+\frac{\beta _{1}}{\alpha _{1}}  )}{
\varGamma   (1+\frac{\beta _{1}}{\alpha _{1}}  )} \dfrac{
\varGamma   (2+\frac{\beta _{1}+1-\alpha _{2}}{\alpha _{1}}  )}{
\varGamma   (w_{1}+1+\frac{\beta _{1}+1-\alpha _{2}}{\alpha _{1}}  )} A_{1}.
\label{Aw1++}
\end{align}
Therefore, by equation \eqref{A_E}, we have
%
\begin{align}
E_{w_{1}} & = \dfrac{A_{w_{1}}}{x_{w_{1},\cdot }} -
\frac{\beta _{2}}{
\alpha _{2}}\nonumber
\\
& = \dfrac{\varGamma   (2+\frac{\beta _{1}+1-\alpha _{2}}{\alpha
_{1}}  )}{\varGamma   (2+\frac{\beta _{1}+1}{\alpha _{1}}  )} \dfrac{\varGamma   (w_{1}+1+\frac{\beta _{1}+1}{\alpha _{1}}  )}{
\varGamma   (w_{1}+1+\frac{\beta _{1}+1-\alpha _{2}}{\alpha _{1}}  )} \dfrac{A_{1}}{x_{1,\cdot }} -
\frac{\beta _{2}}{\alpha _{2}}.
\label{A_E+}
\end{align}
Therefore, by \eqref{GSTR}, the magnitude of $E_{w_{1}}$ 
approaches
$ w_{1}^{\frac{\beta _{1}+1}{\alpha _{1}}-\frac{\beta _{1}+1-\alpha _{2}}{
\alpha _{1}}} = w_{1}^{\frac{\alpha _{2}}{\alpha _{1}}}$ when $w_{1}
\to \infty $. More precisely,
%
\begin{equation}
\label{A_E++} E_{w_{1}} \sim \dfrac{A_{1}}{x_{1,\cdot }}
\dfrac{\varGamma   (2+\frac{
\beta _{1}+1-\alpha _{2}}{\alpha _{1}}  )}{\varGamma   (2+\frac{
\beta _{1}+1}{\alpha _{1}}  )} w_{1}^{
\frac{\alpha _{2}}{\alpha _{1}}} .\qedhere
\end{equation}
\end{proof}

Now we turn to the second moment.
\begin{proof}[Proof of Proposition \ref{secondMoment}]
To find the second moment
\begin{equation*}
M_{w_{1}} = \sum_{l=0}^{\infty }x_{w_{1},l}
l^{2} /x_{w_{1},\cdot }
\end{equation*}
when the central weight $w_{1}$\index{central weight} is fixed, we shall calculate the value
of
%
\begin{equation}
\label{Bwdefi} B_{w_{1}} = \sum_{l=0}^{\infty }x_{w_{1},l}
\biggl(l + \frac{\beta
_{2}}{\alpha _{2}} \biggr) \biggl(l + 1 + \frac{\beta _{2}}{\alpha _{2}}
\biggr).
\end{equation}
We can see that
%
\begin{equation}
\label{B_M} B_{w_{1}} = x_{w_{1},\cdot } \biggl(
M_{w_{1}} + \biggl(1+2 \frac{\beta
_{2}}{\alpha _{2}} \biggr) E_{w_{1}}
+ \frac{\beta _{2}}{\alpha _{2}} \biggl(1+\frac{\beta _{2}}{\alpha _{2}} \biggr) \biggr) .
\end{equation}
Therefore, using $B_{w_{1}}$, we shall find the value of $M_{w_{1}}$.

We start with $B_{0}$. As $x_{0,0}=0$, applying equation
\eqref{gammax0l}, we obtain
\begin{equation*}
B_{0} = \sum_{l=1}^{\infty }x_{0,l}
\biggl(l + \frac{\beta _{2}}{\alpha
_{2}} \biggr) \biggl(l + 1 + \frac{\beta _{2}}{\alpha _{2}}
\biggr) = \dfrac{r}{\alpha _{2}}\dfrac{\varGamma   (1+\frac{\beta +1}{\alpha
_{2}}  )}{\varGamma   (1+\frac{\beta _{2}}{\alpha _{2}}  )} \sum
_{l=0}^{\infty }\dfrac{\varGamma   (l+3+\frac{\beta _{2}}{
\alpha _{2}}  )}{\varGamma   (l+2+\frac{\beta +1}{\alpha _{2}}  )}.
\end{equation*}
By \eqref{prudformula+}, the sum in the above formula is finite if
$\beta _{1} +1 > 2 \alpha _{2}$, and in this case
%
\begin{equation}
\label{B0} B_{0} = \dfrac{r}{\beta _{1} +1-2\alpha _{2}}
\dfrac{\varGamma   (3+\frac{
\beta _{2}}{\alpha _{2}}  )}{\varGamma   (1+\frac{\beta _{2}}{
\alpha _{2}}  )} .
\end{equation}

Now turn to $B_{w_{1}}$ when $w_{1} >0$. By \eqref{xw1l},
%
\begin{align}
B_{w_{1}} & = \sum_{l=1}^{\infty }
\sum_{i=1}^{l} b_{w_{1}-1,i}^{(l)}x
_{w_{1}-1,i} \biggl(l + \frac{\beta _{2}}{\alpha _{2}} \biggr) \biggl(l + 1 +
\frac{\beta _{2}}{\alpha _{2}} \biggr) \nonumber
\\
& \quad  + \sum_{l=1}^{\infty }b_{w_{1},0}^{(l)}
x_{w_{1},0} \biggl(l + \frac{
\beta _{2}}{\alpha _{2}} \biggr) \biggl(l + 1 +
\frac{\beta _{2}}{\alpha
_{2}} \biggr) + x_{w_{1},0} \frac{\beta _{2}}{\alpha _{2}} \biggl(1
+ \frac{
\beta _{2}}{\alpha _{2}} \biggr)
\nonumber
\\
& = \sum_{i=1}^{\infty }x_{w_{1}-1,i}
\sum_{l=i}^{\infty }b_{w_{1}-1,i}
^{(l)} \biggl(l + \frac{\beta _{2}}{\alpha _{2}} \biggr) \biggl(l + 1 +
\frac{\beta _{2}}{\alpha _{2}} \biggr)
\nonumber
\\
& \quad  + x_{w_{1},0} \sum_{l=1}^{\infty }b_{w_{1},0}^{(l)}
\biggl(l + \frac{
\beta _{2}}{\alpha _{2}} \biggr) \biggl(l + 1 + \frac{\beta _{2}}{\alpha
_{2}}
\biggr) + x_{w_{1},0} \frac{\beta _{2}}{\alpha _{2}} \biggl(1 +
\frac{
\beta _{2}}{\alpha _{2}} \biggr) .
\label{Bw1}
\end{align}
We use formulae \eqref{bw11i} and \eqref{bw10}, then apply
\eqref{prudformula+} for the first two terms in the above expression.
So we obtain that both sums are finite if $w_{1} \alpha _{1} + \beta
_{1} +1 > 2 \alpha _{2}$, and
\begin{align*}
&\sum_{l=i}^{\infty }  b_{w_{1}-1,i}^{(l)}
\biggl(l + \frac{\beta _{2}}{
\alpha _{2}} \biggr) \biggl(l + 1 + \frac{\beta _{2}}{\alpha _{2}}
\biggr)
\\
&\quad  = \dfrac{(w_{1}-1)\alpha _{1}+\beta _{1}}{\alpha _{2}} \dfrac{\varGamma
  (i+\frac{w_{1}\alpha _{1}+\beta +1}{\alpha _{2}}  )}{\varGamma
  (i+\frac{\beta _{2}}{\alpha _{2}}  )} \sum
_{l=0}^{\infty }\dfrac{
\varGamma   (l+i+2+\frac{\beta _{2}}{\alpha _{2}}  )}{\varGamma
  (l+i+1+\frac{w_{1}\alpha _{1}+\beta +1}{\alpha _{2}}  )}
\\
&\quad  = \dfrac{(w_{1}-1)\alpha _{1}+\beta _{1}}{\alpha _{2}} \dfrac{\varGamma
  (i+\frac{w_{1}\alpha _{1}+\beta +1}{\alpha _{2}}  )}{\varGamma
  (i+\frac{\beta _{2}}{\alpha _{2}}  )}
\\
& \qquad  \times \frac{\alpha _{2}}{w_{1} \alpha _{1} + \beta _{1} +1 -2 \alpha
_{2}} \dfrac{\varGamma   (i+2+\frac{\beta _{2}}{\alpha _{2}}  )}{
\varGamma   (i+\frac{w_{1}\alpha _{1}+\beta +1}{\alpha _{2}}  )}
\\
&\quad  = \dfrac{(w_{1}-1)\alpha _{1}+\beta _{1}}{w_{1} \alpha _{1} + \beta
_{1} +1 -2 \alpha _{2}} \dfrac{\varGamma   (i+2+\frac{\beta _{2}}{
\alpha _{2}}  )}{\varGamma   (i+\frac{\beta _{2}}{\alpha _{2}}  )}
\end{align*}
and
\begin{align*}
&\sum_{l=1}^{\infty }b_{w_{1},0}^{(l)}
 \biggl(l + \frac{\beta _{2}}{
\alpha _{2}} \biggr) \biggl(l + 1 + \frac{\beta _{2}}{\alpha _{2}}
\biggr)
\\
&\quad  = \dfrac{\varGamma   (1+\frac{w_{1}\alpha _{1}+\beta +1}{\alpha
_{2}}  )}{\varGamma   (\frac{\beta _{2}}{\alpha _{2}}  )} \sum_{l=1}^{\infty }
\dfrac{\varGamma   (l+2+\frac{\beta _{2}}{
\alpha _{2}}  )}{\varGamma   (l+1+\frac{w_{1}\alpha _{1}+
\beta +1}{\alpha _{2}}  )}
\\
&\quad  = \dfrac{\alpha _{2}}{w_{1} \alpha _{1} + \beta _{1} +1 -2 \alpha _{2}} \dfrac{
\varGamma   (3+\frac{\beta _{2}}{\alpha _{2}}  )}{\varGamma
  (\frac{\beta _{2}}{\alpha _{2}}  )} .
\end{align*}
Inserting these expressions into \eqref{Bw1}, we get
%
\begin{align}
B_{w_{1}} &= \sum_{i=1}^{\infty }x_{w_{1}-1,i}
\biggl(i + \frac{\beta
_{2}}{\alpha _{2}} \biggr) \biggl(i + 1 + \frac{\beta _{2}}{\alpha _{2}}
\biggr) \dfrac{(w_{1}-1)\alpha _{1}+\beta
_{1}}{w_{1} \alpha _{1} + \beta _{1} +1 -2 \alpha _{2}} \nonumber
\\
& \quad  + x_{w_{1},0} \dfrac{\alpha _{2}}{w_{1} \alpha _{1} + \beta _{1} +1 -2
\alpha _{2}} \frac{\beta _{2}}{\alpha _{2}} \biggl(1
\,{+}\, \frac{\beta _{2}}{
\alpha _{2}} \biggr) \biggl(2 \,{+}\, \frac{\beta _{2}}{\alpha _{2}} \biggr) +
x_{w_{1},0} \frac{\beta _{2}}{\alpha _{2}} \biggl(1 \,{+}\, \frac{\beta _{2}}{
\alpha _{2}}
\biggr).
\label{Bw1+}
\end{align}
When $w_{1}=1$, we use that from equation \eqref{x10} $x_{1,0}=\dfrac{1-r}{
\alpha _{1}+\beta +1}$ and $x_{0,0}= 0$, so equation \eqref{Bw1+} implies
that
%
\begin{align}
B_{1} & =B_{0} \dfrac{\beta _{1}}{\alpha _{1} + \beta _{1} +1 - 2 \alpha _{2}} + x_{1,0}
\frac{\beta _{2}}{\alpha _{2}} \biggl(1 + \frac{\beta _{2}}{
\alpha _{2}} \biggr) \biggl[1+
\dfrac{2\alpha _{2} + \beta _{2}}{\alpha
_{1} + \beta _{1} +1 - 2\alpha _{2}} \biggr]
\nonumber
\\
& = \dfrac{\beta _{1}}{\alpha _{1} + \beta _{1} +1 - 2 \alpha _{2}} \dfrac{r}{
\beta _{1} +1-2\alpha _{2}} \biggl(1 + \frac{\beta _{2}}{\alpha _{2}}
\biggr) \biggl(2 + \frac{\beta _{2}}{\alpha _{2}} \biggr)
\nonumber
\\
& \quad  + \frac{\beta _{2}}{\alpha _{2}} \biggl(1 + \frac{\beta _{2}}{\alpha _{2}} \biggr)
\dfrac{1-r}{\alpha _{1} + \beta
_{1} +1 - 2 \alpha _{2}} .
\end{align}

When $w_{1}>1$, equation \eqref{Bw1+} implies that
%
\begin{equation}
\label{Bw1+>} B_{w_{1}}= \dfrac{(w_{1}-1)\alpha _{1}+\beta _{1}}{w_{1} \alpha _{1} +
\beta _{1} +1 -2 \alpha _{2}} \sum
_{i=0}^{\infty }x_{w_{1}-1,i} \biggl(i +
\frac{\beta _{2}}{\alpha _{2}} \biggr) \biggl(i + 1 + \frac{\beta
_{2}}{\alpha _{2}} \biggr) ,
\end{equation}
where we applied that $x_{w_{1},0} = \frac{(w_{1}-1)\alpha _{1}+\beta
_{1}}{w_{1} \alpha _{1} + \beta +1 } x_{w_{1}-1,0}$. From equation
\eqref{Bw1+>} we obtain that
%
\begin{align}
B_{w_{1}} & = \dfrac{(w_{1}-1)\alpha _{1}+\beta _{1}}{w_{1} \alpha _{1} +
\beta _{1} +1 -2 \alpha _{2}} B_{w_{1}-1}
= \prod_{k=2}^{w_{1}} \dfrac{k-1+ \frac{\beta _{1}}{\alpha _{1}}}{k + \frac{\beta _{1} +1 -2
\alpha _{2}}{\alpha _{1}}}
B_{1} \nonumber
\\
& = \dfrac{\varGamma   (w_{1}+\frac{\beta _{1}}{\alpha _{1}}  )}{
\varGamma   (1+ \frac{\beta _{1}}{\alpha _{1}}  )} \dfrac{
\varGamma   (2+\frac{\beta _{1}+1-2\alpha _{2}}{\alpha _{1}}  )}{
\varGamma   (w_{1}+1+\frac{\beta _{1}+1-2\alpha _{2}}{\alpha _{1}}  )} B_{1} .
\label{Bw1++}
\end{align}
So from equation \eqref{B_M} we obtain that
%
\begin{align}
 M_{w_{1}} & = \dfrac{B_{w_{1}}}{x_{w_{1},\cdot }} - \biggl(1+2
\frac{
\beta _{2}}{\alpha _{2}} \biggr) E_{w_{1}} - \frac{\beta _{2}}{\alpha
_{2}} \biggl(1+
\frac{\beta _{2}}{\alpha _{2}} \biggr) \nonumber
\\
& = \dfrac{\varGamma   (2+\frac{\beta _{1}+1-2\alpha _{2}}{\alpha
_{1}}  )}{\varGamma   (2+\frac{\beta _{1}+1}{\alpha _{1}}  )} \dfrac{\varGamma   (w_{1}+1+\frac{\beta _{1}+1}{\alpha _{1}}  )}{
\varGamma   (w_{1}+1+\frac{\beta _{1}+1-2\alpha _{2}}{\alpha _{1}}  )} \dfrac{B_{1}}{x_{1,\cdot }}
\nonumber
\\
& \quad  - \biggl(1+2 \frac{\beta _{2}}{\alpha _{2}} \biggr) E_{w_{1}} -
\frac{
\beta _{2}}{\alpha _{2}} \biggl(1+\frac{\beta _{2}}{\alpha _{2}} \biggr) .
\label{B_M+}
\end{align}
Therefore \eqref{GSTR} implies that the magnitude of $M_{w_{1}}$ 
approaches
$ w_{1}^{\frac{\beta _{1}+1}{\alpha _{1}}-\frac{\beta _{1}+1-2\alpha _{2}}{
\alpha _{1}}} = w_{1}^{2\frac{\alpha _{2}}{\alpha _{1}}}$ as $w_{1}
\to \infty $. More precisely,
%
\begin{equation}
\label{B_M++} M_{w_{1}} \sim \dfrac{B_{1}}{x_{1,\cdot }}
\dfrac{\varGamma   (2+\frac{
\beta _{1}+1-2\alpha _{2}}{\alpha _{1}}  )}{\varGamma   (2+\frac{
\beta _{1}+1}{\alpha _{1}}  )} w_{1}^{2\frac{\alpha _{2}}{\alpha
_{1}}}.\qedhere
\end{equation}
\end{proof}
\begin{proof}[Proof of Theorem \ref{mainTHM}]
Propositions \ref{expectation} and \ref{secondMoment} imply
%
\begin{equation}
\frac{M_{w_{1}}}{E_{w_{1}}^{2}} \sim \dfrac{B_{1} x_{1,\cdot }}{A_{1}
^{2}} \dfrac{\varGamma   (2+\frac{\beta _{1}+1-2\alpha _{2}}{\alpha
_{1}}  ) \varGamma   (2+\frac{\beta _{1}+1}{\alpha _{1}}  )}{
  (
\varGamma   (2+\frac{\beta _{1}+1-\alpha _{2}}{\alpha _{1}}  )   )
^{2} }
\end{equation}
as $w_{1}\to \infty $. So Taylor's law is satisfied asymptotically.
\end{proof}

\section{Numerical results}%
\label{numerical}
Here we present some numerical evidence supporting our result. The
scheme of our computer experiment is the following. We fixed the size
$N$ of the stars, the values of the probabilities $p$, $q$ and $r$ and
generated the graph as described in Section \ref{Model} up to a fixed
step $n$. Then we calculated $E_{w_{1}}$ and $M_{w_{1}}$, that is, the
expectation\index{expectation} and the second moment of peripheral weight $w_{2}$\index{peripheral ! weight} of the
vertices when their central weight $w_{1}$\index{central weight} is fixed. We visualized the
function $E_{w_{1}} \to M_{w_{1}}$ using the logarithmic scale on both axes.
According to Theorem \ref{mainTHM} the result should approximately be
a straight line with slope 2. We also calculated $E_{w_{2}}$ and
$M_{w_{2}}$, that is, the expectation\index{expectation} and the second moment of central
weight $w_{1}$\index{central weight} of the vertices when their peripheral weight
$w_{2}$\index{peripheral ! weight} was fixed. We visualized the function $E_{w_{2}} \to M_{w_{2}}$
using the logarithmic scale on both axes. This curve also should
approximately be a straight line with slope 2.

In the following five experiments we used various parameter sets. The
step size was always $n=10^{8}$. One can check that in these five
examples the conditions $\beta _{1} +1 > 2 \alpha _{2}$ and $\beta _{2} +1
> 2 \alpha _{1}$ are satisfied. In each case we see that both
$E_{w_{1}} \to M_{w_{1}}$ and $E_{w_{2}} \to M_{w_{2}}$ are
approximately straight lines on the log-log scale.

\begin{exper}
Here $N=4$, $p=0.4$, $q=0.4$, $r=0.4$. On Figure \ref{f1} we see that
both $E_{w_{1}} \to M_{w_{1}}$ (left) and $E_{w_{2}} \to M_{w_{2}}$
(right) are approximately straight lines on the log-log scale.
%
\begin{figure}[t]
\includegraphics{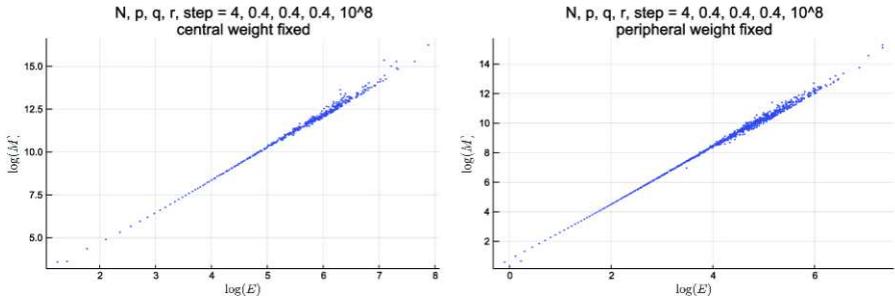}
\caption{$E_{w_{1}} \to M_{w_{1}}$ (left) and $E_{w_{2}} \to M_{w_{2}}$
(right) on the log-log scale, when $N=4$, $p=0.4$, $q=0.4$, $r=0.4$, and $n=10^{8}$\xch{}{.}}
\label{f1}
\end{figure}
\end{exper}
%
\begin{exper}
$N=5$, $p=0.4$, $q=0.4$ and $r=0.4$. The results can be seen on Figure
\ref{f3}.
%
\begin{figure}[t]
\includegraphics{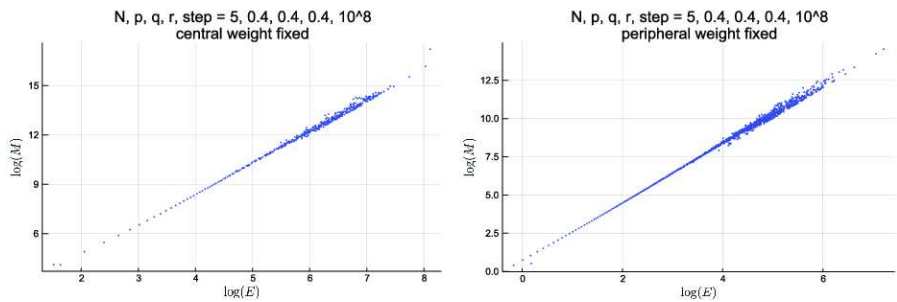}
\caption{$E_{w_{1}} \to M_{w_{1}}$ (left) and $E_{w_{2}} \to M_{w_{2}}$
(right) on the log-log scale, when $N=5$, $p=0.4$, $q=0.4$, $r=0.4$, and $n=10^{8}$\xch{}{.}}
\label{f3}
\end{figure}
\end{exper}
%
\begin{exper}
$N=5$, $p=0.5$, $q=0.5$ and $r=0.5$. The results can be seen on Figure
\ref{f4}.
%
\begin{figure}[t!]
\includegraphics{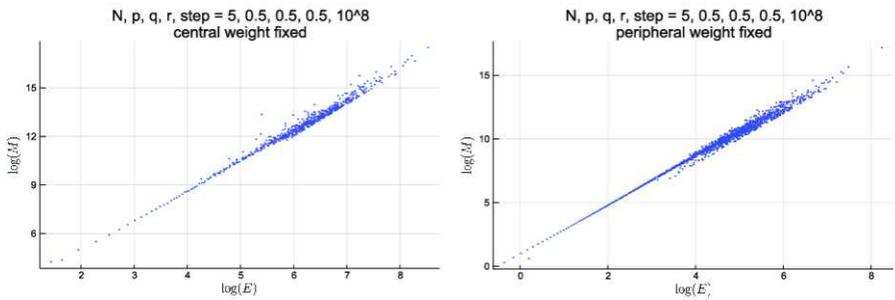}
\caption{$E_{w_{1}} \to M_{w_{1}}$ (left) and $E_{w_{2}} \to M_{w_{2}}$
(right) on the log-log scale, when $N=5$, $p=0.5$, $q=0.5$, $r=0.5$, and $n=10^{8}$\xch{}{.}}
\label{f4}
\end{figure}
\end{exper}\newpage
%
\begin{exper}
$N=6$, $p=0.3$, $q=0.6$ and $r=0.3$. The results can be seen on Figure
\ref{f5}.
%
\begin{figure}[t!]
\includegraphics{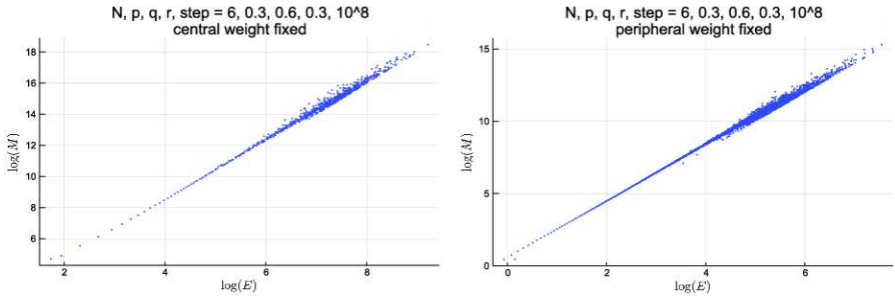}
\caption{$E_{w_{1}} \to M_{w_{1}}$ (left) and $E_{w_{2}} \to M_{w_{2}}$
(right) on the log-log scale, when $N=6$, $p=0.3$, $q=0.6$, $r=0.3$, and $n=10^{8}$\xch{}{.}}
\label{f5}\vspace*{-1pt}
\end{figure}
\end{exper}
%
\begin{exper}
$N=10$, $p=0.4$, $q=0.2$ and $r=0.7$. The results can be seen on Figure
\ref{f6}.
%
\begin{figure}[t!]
\includegraphics{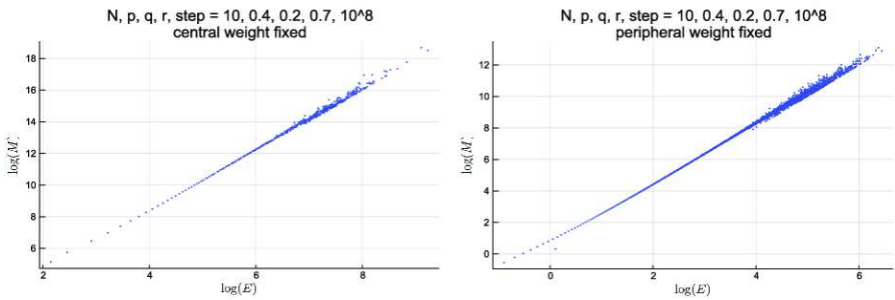}
\caption{$E_{w_{1}} \to M_{w_{1}}$ (left) and $E_{w_{2}} \to M_{w_{2}}$
(right) on the log-log scale, when $N=10$, $p=0.4$, $q=0.2$, $r=0.7$, and $n=10^{8}$\xch{}{.}}
\label{f6}\vspace*{-1pt}
\end{figure}
\end{exper}
Finally, we show a numerical result when the conditions of Theorem
\ref{mainTHM} are not satisfied.
%
\begin{exper}
Let $N=5$, $p=0.9$, $q=0.5$ and $r=0.9$, $n=10^{8}$. On Figure
\ref{f7} one can see that Taylor's power law\index{power law} is not satisfied.
%
\begin{figure}[t!]
\includegraphics{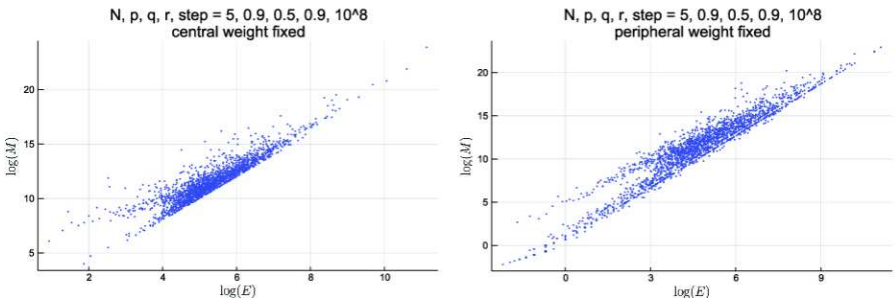}
\caption{A case when Taylor's power law is not satisfied. $E_{w_{1}} \to M_{w_{1}}$ (left) and $E_{w_{2}} \to M_{w_{2}}$
(right) on the log-log scale, when $N=5$, $p=0.9$, $q=0.5$, $r=0.9$, and $n=10^{8}$\xch{}{.}}
\label{f7}\vspace*{-1pt}
\end{figure}
\end{exper}


\begin{acknowledgement}[title={Acknowledgments}]
The authors are grateful to the referees and to the editor for the
careful reading of the paper and for the valuable suggestions.
\end{acknowledgement}

\begin{funding}
The research was supported by the construction \gnumber[refid=GS1]{EFOP-3.6.3-VEKOP-16-2017-00002};
the project was supported by the \gsponsor[id=GS1,sponsor-id=501100000780]{European Union},
co-financed by the \gsponsor[id=GS2,sponsor-id=501100004895]{European Social Fund}.
\end{funding}


\begin{thebibliography}{99}

\bibitem{BaMo2}
\begin{barticle}
\bauthor{\bsnm{Backhausz}, \binits{A.}},
\bauthor{\bsnm{M\'ori}, \binits{T.F.}}:
\batitle{Weights and degrees in a random graph model based on 3-interactions}.
\bjtitle{Acta Math. Hung.}
\bvolume{143/1},
\bfpage{23}--\blpage{43}
(\byear{2014})
\bid{doi={10.\\1007/s10474-014-0390-8}, mr={3215601}}
\end{barticle}
%
\OrigBibText
\begin{barticle}
\bauthor{\bsnm{Backhausz}, \binits{A.}},
\bauthor{\bsnm{M\'ori}, \binits{T.F.}}:
\batitle{Weights and degrees in a random graph model based on 3-interactions}.
\bjtitle{Acta Math. Hungar.}
\bvolume{143/1},
\bfpage{23}--\blpage{43}
(\byear{2014})
\end{barticle}
\endOrigBibText
\bptok{structpyb}%
\endbibitem

\bibitem{barabasiBook}
\begin{bbook}
\bauthor{\bsnm{Barab\'asi}, \binits{A.L.}}:
\bbtitle{Network Science}.
\bpublisher{Cambridge University Press},
\blocation{Cambridge}
(\byear{2016})
\end{bbook}
%
\OrigBibText
\begin{bbook}
\bauthor{\bsnm{Barab\'asi}, \binits{A.L.}}:
\bbtitle{Network Science}.
\bpublisher{Cambridge University Press},
\blocation{Cambridge}
(\byear{2016})
\end{bbook}
\endOrigBibText
\bptok{structpyb}%
\endbibitem

\bibitem{barabasi}
\begin{barticle}
\bauthor{\bsnm{Barab\'asi}, \binits{A.L.}},
\bauthor{\bsnm{Albert}, \binits{R.}}:
\batitle{Emergence of scaling in random networks}.
\bjtitle{Science}
\bvolume{286},
\bfpage{509}--\blpage{512}
(\byear{1999})
\bid{doi={10.1126/science.286.5439.509}, mr={2091634}}
\end{barticle}
%
\OrigBibText
\begin{barticle}
\bauthor{\bsnm{Barab\'asi}, \binits{A.L.}},
\bauthor{\bsnm{Albert}, \binits{R.}}:
\batitle{Emergence of scaling in random networks}.
\bjtitle{Science}
\bvolume{286},
\bfpage{509}--\blpage{512}
(\byear{1999})
\end{barticle}
\endOrigBibText
\bptok{structpyb}%
\endbibitem

\bibitem{bollobas}
\begin{barticle}
\bauthor{\bsnm{Bollob\'as}, \binits{B.}},
\bauthor{\bsnm{Riordan}, \binits{O.M.}},
\bauthor{\bsnm{Spencer}, \binits{J.}},
\bauthor{\bsnm{Tusn\'ady}, \binits{G.}}:
\batitle{The degree sequence of a scale-free random graph process}.
\bjtitle{Random Struct. Algorithms}
\bvolume{18},
\bfpage{279}--\blpage{290}
(\byear{2001})
\bid{doi={10.1002/rsa.1009}, mr={1824277}}
\end{barticle}
%
\OrigBibText
\begin{barticle}
\bauthor{\bsnm{Bollob\'as}, \binits{B.}},
\bauthor{\bsnm{Riordan}, \binits{O.M.}},
\bauthor{\bsnm{Spencer}, \binits{J.}},
\bauthor{\bsnm{Tusn\'ady}, \binits{G.}}:
\batitle{The degree sequence of a scale-free random graph process}.
\bjtitle{Random Structures Algorithms}
\bvolume{18},
\bfpage{279}--\blpage{290}
(\byear{2001})
\end{barticle}
\endOrigBibText
\bptok{structpyb}%
\endbibitem

\bibitem{Chung-Lu}
\begin{bbook}
\bauthor{\bsnm{Chung}, \binits{F.}},
\bauthor{\bsnm{Lu}, \binits{L.}}:
\bbtitle{Complex Graphs and Networks}.
\bpublisher{AMS and CBMS},
\blocation{Providence}
(\byear{2006})
\bid{doi={10.1090/cbms/107}, mr={2248695}}
\end{bbook}
%
\OrigBibText
\begin{bbook}
\bauthor{\bsnm{Chung}, \binits{F.}},
\bauthor{\bsnm{Lu}, \binits{L.}}:
\bbtitle{Complex Graphs and Networks}.
\bpublisher{AMS and CBMS},
\blocation{Providence}
(\byear{2006})
\end{bbook}
\endOrigBibText
\bptok{structpyb}%
\endbibitem

\bibitem{Cohen2018}
\begin{barticle}
\bauthor{\bsnm{Cohen}, \binits{J.E.}},
\bauthor{\bsnm{Bohk-Ewald}, \binits{C.}},
\bauthor{\bsnm{Rau}, \binits{R.}}:
\batitle{Gompertz, {M}akeham, and {S}iler models explain {T}aylor's law in
 human mortality data}.
\bjtitle{Demogr. Res.}
\bvolume{38},
\bfpage{773}--\blpage{842}
(\byear{2018})
\end{barticle}
%
\OrigBibText
\begin{barticle}
\bauthor{\bsnm{Cohen}, \binits{J.E.}},
\bauthor{\bsnm{Bohk-Ewald}, \binits{C.}},
\bauthor{\bsnm{Rau}, \binits{R.}}:
\batitle{Gompertz, {M}akeham, and {S}iler models explain {T}aylor's law in
 human mortality data}.
\bjtitle{Demographic Research}
\bvolume{38},
\bfpage{773}--\blpage{842}
(\byear{2018})
\end{barticle}
\endOrigBibText
\bptok{structpyb}%
\endbibitem

\bibitem{Cohen2013}
\begin{barticle}
\bauthor{\bsnm{Cohen}, \binits{J.E.}},
\bauthor{\bsnm{Xu}, \binits{M.}},
\bauthor{\bsnm{Schuster}, \binits{W.S.F.}}:
\batitle{Stochastic multiplicative population growth predicts and interprets
 {T}aylor's power law of fluctuation scaling}.
\bjtitle{Proc. R. Soc. B}
\bvolume{280},
\bfpage{20122955}
(\byear{2013})
\end{barticle}
%
\OrigBibText
\begin{barticle}
\bauthor{\bsnm{Cohen}, \binits{J.E.}},
\bauthor{\bsnm{Xu}, \binits{M.}},
\bauthor{\bsnm{Schuster}, \binits{W.S.F.}}:
\batitle{Stochastic multiplicative population growth predicts and interprets
 {T}aylor's power law of fluctuation scaling}.
\bjtitle{Proc. Royal Soc. B}
\bvolume{280},
\bfpage{20122955}
(\byear{2013})
\end{barticle}
\endOrigBibText
\bptok{structpyb}%
\endbibitem

\bibitem{cooper}
\begin{barticle}
\bauthor{\bsnm{Cooper}, \binits{C.}},
\bauthor{\bsnm{Frieze}, \binits{A.}}:
\batitle{A general model of web graphs}.
\bjtitle{Random Struct. Algorithms}
\bvolume{22},
\bfpage{311}--\blpage{335}
(\byear{2003})
\bid{doi={10.1002/rsa.10084}, mr={1966545}}
\end{barticle}
%
\OrigBibText
\begin{barticle}
\bauthor{\bsnm{Cooper}, \binits{C.}},
\bauthor{\bsnm{Frieze}, \binits{A.}}:
\batitle{A general model of web graphs}.
\bjtitle{Random Structures Algorithms}
\bvolume{22},
\bfpage{311}--\blpage{335}
(\byear{2003})
\end{barticle}
\endOrigBibText
\bptok{structpyb}%
\endbibitem

\bibitem{barabasi2004}
\begin{barticle}
\bauthor{\bparticle{de} \bsnm{Menezes}, \binits{M.A.}},
\bauthor{\bsnm{Barab\'asi}, \binits{A.L.}}:
\batitle{Fluctuations in network dynamics}.
\bjtitle{Phys. Rev. Lett.}
\bvolume{92},
\bfpage{028701}
(\byear{2004})
\end{barticle}
%
\OrigBibText
\begin{barticle}
\bauthor{\bparticle{de} \bsnm{Menezes}, \binits{M.A.}},
\bauthor{\bsnm{Barab\'asi}, \binits{A.L.}}:
\batitle{Fluctuations in network dynamics}.
\bjtitle{Phys. Rev. Lett.}
\bvolume{92},
\bfpage{028701}
(\byear{2004})
\end{barticle}
\endOrigBibText
\bptok{structpyb}%
\endbibitem

\bibitem{durrett}
\begin{bbook}
\bauthor{\bsnm{Durrett}, \binits{R.}}:
\bbtitle{Random Graph Dynamics}.
\bpublisher{Cambridge University Press},
\blocation{Cambridge}
(\byear{2007})
\bid{mr={2271734}}
\end{bbook}
%
\OrigBibText
\begin{bbook}
\bauthor{\bsnm{Durrett}, \binits{R.}}:
\bbtitle{Random Graph Dynamics}.
\bpublisher{Cambridge University Press},
\blocation{Cambridge}
(\byear{2007})
\end{bbook}
\endOrigBibText
\bptok{structpyb}%
\endbibitem

\bibitem{Kertesz2008}
\begin{barticle}
\bauthor{\bsnm{Eisler}, \binits{Z.}},
\bauthor{\bsnm{Bartos}, \binits{I.}},
\bauthor{\bsnm{Kert\'esz}, \binits{J.}}:
\batitle{Fluctuation scaling in complex systems: Taylor's law and beyond}.
\bjtitle{Adv. Phys.}
\bvolume{57/1},
\bfpage{89142}
(\byear{2008})
\end{barticle}
%
\OrigBibText
\begin{barticle}
\bauthor{\bsnm{Eisler}, \binits{Z.}},
\bauthor{\bsnm{Bartos}, \binits{I.}},
\bauthor{\bsnm{Kert\'esz}, \binits{J.}}:
\batitle{Fluctuation scaling in complex systems: Taylor's law and beyond}.
\bjtitle{Adv. Phys.}
\bvolume{57/1},
\bfpage{89142}
(\byear{2008})
\end{barticle}
\endOrigBibText
\bptok{structpyb}%
\endbibitem

\bibitem{FIPB3}
\begin{barticle}
\bauthor{\bsnm{Fazekas}, \binits{I.}},
\bauthor{\bsnm{Porv\'azsnyik}, \binits{B.}}:
\batitle{Limit theorems for the weights and the degrees in an $n$-interactions
 random graph model}.
\bjtitle{Open Math.}
\bvolume{14/1},
\bfpage{414}--\blpage{424}
(\byear{2016})
\bid{doi={10.1515/math-2016-0039}, mr={3514903}}
\end{barticle}
%
\OrigBibText
\begin{barticle}
\bauthor{\bsnm{Fazekas}, \binits{I.}},
\bauthor{\bsnm{Porv\'azsnyik}, \binits{B.}}:
\batitle{Limit theorems for the weights and the degrees in an $n$-interactions
 random graph model}.
\bjtitle{Open Mathematics}
\bvolume{14/1},
\bfpage{414}--\blpage{424}
(\byear{2016})
\end{barticle}
\endOrigBibText
\bptok{structpyb}%
\endbibitem

\bibitem{FIPB2}
\begin{barticle}
\bauthor{\bsnm{Fazekas}, \binits{I.}},
\bauthor{\bsnm{Porv\'azsnyik}, \binits{B.}}:
\batitle{Scale-free property for degrees and weights in an $n$-interactions
 random graph model}.
\bjtitle{J. Math. Sci.}
\bvolume{214/1},
\bfpage{69}--\blpage{82}
(\byear{2016})
\bid{doi={10.1007/s10958-016-2758-5}, mr={3476251}}
\end{barticle}
%
\OrigBibText
\begin{barticle}
\bauthor{\bsnm{Fazekas}, \binits{I.}},
\bauthor{\bsnm{Porv\'azsnyik}, \binits{B.}}:
\batitle{Scale-free property for degrees and weights in an $n$-interactions
 random graph model}.
\bjtitle{Journal of Mathematical Sciences}
\bvolume{214/1},
\bfpage{69}--\blpage{82}
(\byear{2016})
\end{barticle}
\endOrigBibText
\bptok{structpyb}%
\endbibitem

\bibitem{Nstar}
\begin{barticle}
\bauthor{\bsnm{Fazekas}, \binits{I.}},
\bauthor{\bsnm{Nosz\'aly}, \binits{C.}},
\bauthor{\bsnm{Perecs\'enyi}, \binits{A.}}:
\batitle{The $n$-star network evolution model}.
\bjtitle{J. Appl. Probab.}
\bvolume{56/2},
\bfpage{416}--\blpage{440}
(\byear{2019})
\bid{doi={10.1017/jpr.2019.\\21}, mr={3986944}}
\end{barticle}
%
\OrigBibText
\begin{barticle}
\bauthor{\bsnm{Fazekas}, \binits{I.}},
\bauthor{\bsnm{Nosz\'aly}, \binits{C.}},
\bauthor{\bsnm{Perecs\'enyi}, \binits{A.}}:
\batitle{The $n$-star network evolution model}.
\bjtitle{Journal of Applied Probability}
\bvolume{56/2},
\bfpage{416}--\blpage{440}
(\byear{2019})
\end{barticle}
\endOrigBibText
\bptok{structpyb}%
\endbibitem

\bibitem{Morris1982}
\begin{barticle}
\bauthor{\bsnm{Morris}, \binits{C.N.}}:
\batitle{Natural exponential families with quadratic variance functions}.
\bjtitle{Ann. Stat.}
\bvolume{10/1},
\bfpage{65}--\blpage{80}
(\byear{1982})
\bid{mr={0642719}}
\end{barticle}
%
\OrigBibText
\begin{barticle}
\bauthor{\bsnm{Morris}, \binits{C.N.}}:
\batitle{Natural exponential families with quadratic variance functions}.
\bjtitle{Annals of Statistics}
\bvolume{10/1},
\bfpage{65}--\blpage{80}
(\byear{1982})
\end{barticle}
\endOrigBibText
\bptok{structpyb}%
\endbibitem

\bibitem{Prud}
\begin{bbook}
\bauthor{\bsnm{Prudnikov}, \binits{A.P.}},
\bauthor{\bsnm{Brychkov}, \binits{Y.A.}},
\bauthor{\bsnm{Marichev}, \binits{O.I.}}:
\bbtitle{Integrals and Series}.
\bpublisher{Gordon \& Breach Science Publishers},
\blocation{New York}
(\byear{1986})
\bid{mr={0888165}}\vadjust{\vfill\goodbreak}
\end{bbook}
%
\OrigBibText
\begin{bbook}
\bauthor{\bsnm{Prudnikov}, \binits{A.P.}},
\bauthor{\bsnm{Brychkov}, \binits{Y.A.}},
\bauthor{\bsnm{Marichev}, \binits{O.I.}}:
\bbtitle{Integrals and Series}.
\bpublisher{Gordon \& Breach Science Publishers},
\blocation{New York}
(\byear{1986})
\end{bbook}
\endOrigBibText
\bptok{structpyb}%
\endbibitem

\bibitem{Nastos}
\begin{bchapter}
\bauthor{\bsnm{Sridharan}, \binits{A.}},
\bauthor{\bsnm{Gao}, \binits{Y.}},
\bauthor{\bsnm{Wu}, \binits{K.}},
\bauthor{\bsnm{Nastos}, \binits{J.}}:
\bctitle{Statistical behavior of embeddedness and communities of overlapping
 cliques in online social networks}.
In: \bbtitle{Proceedings IEEE INFOCOM}.
\bpublisher{IEEE}
(\byear{2011})
\bid{doi={10.1109/TCSI.2009.2025803}, mr={2723225}}
\end{bchapter}
%
\OrigBibText
\begin{bchapter}
\bauthor{\bsnm{Sridharan}, \binits{A.}},
\bauthor{\bsnm{Gao}, \binits{Y.}},
\bauthor{\bsnm{Wu}, \binits{K.}},
\bauthor{\bsnm{Nastos}, \binits{J.}}:
\bctitle{Statistical behavior of embeddedness and communities of overlapping
 cliques in online social networks}.
In: \bbtitle{Proceedings IEEE INFOCOM}.
\bpublisher{IEEE}
(\byear{2011})
\end{bchapter}
\endOrigBibText
\bptok{structpyb}%
\endbibitem

\bibitem{Taylor1961}
\begin{barticle}
\bauthor{\bsnm{Taylor}, \binits{L.R.}}:
\batitle{Aggregation, variance and the mean}.
\bjtitle{Nature}
\bvolume{189},
\bfpage{732}--\blpage{735}
(\byear{1961})
\end{barticle}
%
\OrigBibText
\begin{barticle}
\bauthor{\bsnm{Taylor}, \binits{L.R.}}:
\batitle{Aggregation, variance and the mean}.
\bjtitle{Nature}
\bvolume{189},
\bfpage{732}--\blpage{735}
(\byear{1961})
\end{barticle}
\endOrigBibText
\bptok{structpyb}%
\endbibitem

\bibitem{hofstad}
\begin{bbook}
\bauthor{\bparticle{van~der} \bsnm{Hofstad}, \binits{R.}}:
\bbtitle{Random Graphs and Complex Networks}.
\bpublisher{Cambridge University Press},
\blocation{Cambridge}
(\byear{2017})
\end{bbook}
%
\OrigBibText
\begin{bbook}
\bauthor{\bparticle{van~der} \bsnm{Hofstad}, \binits{R.}}:
\bbtitle{Random Graphs and Complex Networks}.
\bpublisher{Cambridge University Press},
\blocation{Cambridge}
(\byear{2017})
\end{bbook}
\endOrigBibText
\bptok{structpyb}%
\endbibitem

\end{thebibliography}
\end{document}